\documentclass[11 pt]{amsart}
\usepackage{fullpage, amsthm, amsmath, amsfonts, amssymb, url, mathrsfs, stmaryrd}
\usepackage[colorlinks]{hyperref}

\author{Casey Rodriguez}
\address{Department of Mathematics\\
  University of Chicago\\
  5734 S. University Avenue \\
  Chicago, IL, 60637}
\email{c-rod216@math.uchicago.edu}

\numberwithin{equation}{section}

\newcommand{\R}{\mathbb R}

\newcommand{\ra}{\rangle}
\newcommand{\la}{\langle}

\newcommand{\diver}{\mbox{div}_g}

\newcommand{\nmlder}{\partial_{\nu}}
\newcommand{\grad}{\nabla_g}
\newcommand{\laplace}{\Delta_g}

\newcommand{\vol}{\hspace{2 pt} dV_g}
\newcommand{\tvol}{\hspace{2 pt} dV_{g}}

\newtheorem{lem}{Lemma}[section]
\newtheorem{thm}[lem]{Theorem}
\newtheorem{ppn}[lem]{Proposition}
\newtheorem{defn}[lem]{Definition}
\newtheorem{cor}[lem]{Corollary}

\title{A Partial Data Result for Less Regular Conductivities \\ in Admissible Geometries}
\date{\today}

\begin{document}

\begin{abstract}
We consider the Calder\'on problem with partial data in certain admissible geometries, that is, on compact Riemannian manifolds
with boundary which are conformally embedded in a product of the Euclidean line and a simple manifold. We show that
measuring the Dirichlet--to--Neumann map on roughly half of the boundary determines a conductivity that has essentially
3/2 derivatives. As a corollary, we strengthen a partial data result due to Kenig, Sj\"ostrand, and Uhlmann.
\end{abstract}

\maketitle

\section{Introduction}
In 1980 A. P. Calder\'on published a short paper \cite{cald} where he posed the following question: is it possible to determine the electrical conductivity of a medium by making voltage
and current measurements on the boundary?  This pioneering contribution motivated many developments in inverse problems, in particular the construction
of \lq complex geometrical optics' solutions of partial differential equations to solve several inverse problems.  The precise mathematical
formulation of the problem is the following.

Let $\Omega \subseteq \R^n$, $n \geq 2$, be a bounded domain with smooth boundary.  The electrical conductivity of $\Omega$ is modeled by a bounded
positive function $\gamma$.  In the absence of sources or sinks, given a boundary potential $f \in H^{1/2}(\partial \Omega)$ the induced potential $u \in H^1(\Omega)$
solves the Dirichlet problem
\begin{align*}
\mbox{div}(\gamma \nabla u) &= 0 \quad \mbox{in } \Omega \\
u &= f \quad \mbox{on } \partial \Omega.
\end{align*}
The Dirichlet--to--Neumann map, or voltage--to--current map, is given by
$$
\Lambda_\gamma(f) = \left ( \gamma \partial_\nu u \right ) \Big |_{\partial \Omega}
$$
where $\nu$ denotes the unit outer normal to $\partial \Omega$.  The Calder\'on problem is to determine $\gamma$ from measurements
of $\Lambda_\gamma$.

Substantial progress has been made on the Calder\'on problem since Calder\'on's paper.  See \cite{uhl2} for an exposition of many of the
main results. Uhlmann and Sylvester proved in \cite{uhl1} that for $n \geq 3$, knowledge of $\Lambda_\gamma$
on the whole boundary uniquely determines $\gamma \in C^2(\overline{\Omega})$.  Since then, there has been considerable work on
reducing the assumption that $\gamma \in C^2(\overline{\Omega})$.  In \cite{hab} Haberman and Tataru improved this assumption to
$\gamma \in C^1(\overline{\Omega})$.  Recently in \cite{hab2} Haberman improved this assumption to $\gamma$ with
unbounded gradient in low dimensions, and in \cite{caro} Caro and Rogers proved uniqueness for Lipschitz $\gamma$ in higher dimensions.
For $n = 2$, Astala and P\"aiv\"arinta proved in \cite{ast} that knowledge of $\Lambda_\gamma$
on the whole boundary uniquely determines $\gamma \in L^\infty(\Omega)$.

In the case that $\Lambda_\gamma$ is measured only on part of the boundary, it was first shown by Bukhgeim and Uhlmann in \cite{buk} that for $n \geq 3$,
knowledge of $\Lambda_\gamma$ on roughly half of the boundary determines $\gamma \in C^2(\overline{\Omega})$ uniquely.  The assumptions made in \cite{buk} on the
structure of the subset where $\Lambda_\gamma$ is measured were greatly improved by Kenig et al. in \cite{ksu}, but their assumption on the regularity of the
conductivity was the same. The regularity assumption made on the conductivity in \cite{buk} was improved by Knudsen in \cite{knu} to $\gamma \in W^{3/2+,2n}(\Omega)$.
In \cite{zhang} Zhang used ideas from \cite{hab} to reduce this further to $\gamma \in C^1 (\overline{\Omega})$.
To the author's knowledge, there has been no improvement made to the regularity assumption in \cite{ksu}. By
considering a more general setting for the Calder\'on problem, we will improve the regularity assumption in the result from \cite{ksu}.

In this paper, we replace $\Omega$ with a compact $n$--dimensional Riemannian manifold $(M,g)$.  In particular, let $\gamma$ be a bounded positive function
on $M$. Let $\nu$ denote the unit outer normal to
$\partial M$ with respect to $g$, $\grad$ denote the gradient operator on $M$, $\diver$ denote the divergence operator on $M$, and $|g| = \det g$.
In local coordinates $(x^i)$ where $g = (g_{ij})$,
\begin{align*}
\left ( \grad u \right )^i &= g^{ij} \partial_j u, \\
\diver X &= \frac{1}{|g|^{1/2}} \partial_i \left ( |g|^{1/2} X^i \right ),
\end{align*}
for smooth functions $u$ and vector fields $X = X^i \partial_i$.

  Given $f \in H^{1/2}(\Omega)$, define
$$
\Lambda_{g,\gamma}(f) = \left ( \gamma \partial_\nu u \right ) \Big |_{\partial M}, \
$$
where $u \in H^1(M)$ is the unique solution to the Dirichlet problem
\begin{align}
\diver(\gamma \nabla_g u) &= 0 \quad \mbox{in } M \label{condeq1}\\
u &= f \quad \mbox{on } \partial M. \notag
\end{align}
The inverse problem is to determine $\gamma$ from the knowledge of $\Lambda_{g,\gamma}$.
In order to state our main result, we need the following notion of an \emph{admissible} Riemannian manifold which was introduced
in \cite{dfksu}.

\begin{defn}
A compact Riemannian manifold $(M,g)$ with dimension $n \geq 3$ and with boundary $\partial M$, is called admissible if $M \subseteq \R \times M^{int}_0$
for some $(n-1)$--dimensional simple manifold $(M_0,g_0)$, and if $g = c(e \oplus g_0)$ where $e$ is the Euclidean metric on $\R$ and $c$
is a smooth positive function on $M$.
\end{defn}
Here a compact manifold $(M_0,g_0)$ with boundary is simple if for any $\omega \in M_0$, the exponential map $\exp_\omega$ with its maximal domain of
definition is a diffeomorphism onto $M_0$ and if $\partial M_0$ is strictly convex (that is, the second fundamental form of $\partial M_0
\hookrightarrow M_0$ is positive definite).  Examples of admissible manifolds include compact submanifolds of Euclidean space, the sphere minus a point,
and hyperbolic space.

The following theorem is the main result of this paper.
\begin{thm}\label{thm1}
Let $(M,g)$ be an admissible manifold.  Given $\epsilon > 0$,
define
\begin{align*}
\partial M_{-,\epsilon} = \{ x \in \partial M : \nmlder \varphi(x) < \epsilon \}, \\
\partial M_{+,\epsilon} = \{ x \in \partial M : \nmlder \varphi(x) \geq \epsilon \}
\end{align*}
where $\varphi(x) = x_1$.  Let $\gamma_1,\gamma_2 \in W^{3/2+\eta,2n}(M)$ for some $\eta > 0$. Suppose further that
$\gamma_1|_{\partial M} = \gamma_2|_{\partial M}$ and $\nmlder \gamma_1|_{\partial M} = \nmlder \gamma_2|_{\partial M}$.  Then if for some $\epsilon > 0$
\begin{align*}
\Lambda_{\gamma_1} (f)|_{\partial M_{-,\epsilon}} = \Lambda_{\gamma_2} (f)|_{\partial M_{-,\epsilon}} \mbox{ for any } f \in H^{1/2}(\partial M),
\end{align*}
we have that $\gamma_1 = \gamma_2$.
\end{thm}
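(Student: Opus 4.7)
The plan is to follow the now-standard reduction of the conductivity problem on $(M,g)$ to a Schr\"odinger problem via the substitution $v = \gamma^{1/2} u$, which formally converts \eqref{condeq1} into $(-\laplace + q_\gamma) v = 0$ with $q_\gamma = \gamma^{-1/2} \laplace \gamma^{1/2}$. Because $\gamma \in W^{3/2+\eta,2n}(M)$, the potential $q_\gamma$ only lies in $W^{-1/2+\eta,2n}(M)$, so the Schr\"odinger equation and all duality identities must be interpreted in a weak sense. The boundary matching hypotheses $\gamma_1 = \gamma_2$ and $\nmlder \gamma_1 = \nmlder \gamma_2$ on $\partial M$ are exactly what one needs to extend the conductivities to a slightly enlarged admissible manifold so that they agree off $M$, allowing integration by parts with no boundary contribution from the potentials themselves.

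The heart of the argument will be the construction of complex geometric optics solutions adapted to the limiting Carleman weight $\varphi(x) = x_1$ provided by the admissible structure. Following the Dos Santos Ferreira--Kenig--Salo--Uhlmann framework, I would seek, for large $s>0$, solutions of the form
\[
v_s = e^{-s(\varphi + i\psi)}\bigl(a + r_s\bigr),
\]
where $\psi$ is a conjugate Carleman weight built from geodesics in $(M_0, g_0)$, $a$ solves a transport equation along those geodesics, and $r_s$ is a remainder. To accommodate the rough potential $q_\gamma$, rather than estimating $r_s$ pointwise in $s$ I would control it in Bourgain-type spaces $X^{b}_{s}$ adapted to the conjugated operator $e^{s\varphi}(-\laplace)e^{-s\varphi}$, in the spirit of Haberman--Tataru and Zhang. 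The necessary solvability and remainder bounds reduce to resolvent estimates for this conjugated operator which, thanks to the product form $g = c(e \oplus g_0)$, inherit both the flat Fourier structure in the $x_1$ direction and the geodesic ray structure on the simple factor $M_0$.

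To incorporate the partial data hypothesis I would use a boundary Carleman estimate of Kenig--Sj\"ostrand--Uhlmann type with weight $\varphi$, which absorbs the Dirichlet--to--Neumann data on $\partial M_{+,\epsilon}$ and yields the integral identity
\[
\int_M (q_1 - q_2)\, v_1 v_2 \vol = 0
\]
for CGOs $v_2$ as above and $v_1$ built from the conjugate weight $-\varphi - i\psi$. Substituting the CGOs and letting $s \to \infty$ along an averaged sequence to exploit the $X^{b}_s$ bounds, the leading term produces the attenuated geodesic ray transform (against the amplitudes $a_1 a_2$) of $\log(\gamma_1/\gamma_2)$ on $(M_0, g_0)$. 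Injectivity of this transform on simple manifolds then forces $\gamma_1 = \gamma_2$.

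The main obstacle is compatibility of the two frameworks being combined. The Haberman--Tataru--Zhang machinery yields remainder estimates that are only bounded \emph{on average} in the large parameter $s$, while the KSU partial data argument produces an unsigned boundary contribution on $\partial M_{+,\epsilon}$ that a priori must be controlled \emph{uniformly} in $s$. The key technical step will therefore be to prove a boundary Carleman estimate that gains sufficiently many powers of $s$ to absorb a rough-potential error into an $s$-averaged norm, together with a trace/interpolation argument placing the CGO remainders $r_s$ in an $X^{b}_s$ space that is still well behaved at $\partial M$ and under the conformal scaling by $c$. Once these estimates are in place, the stationary phase analysis in the $(x_1, M_0)$ variables proceeds essentially as in \cite{dfksu}, and the conclusion follows.
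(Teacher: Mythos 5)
Your proposal diverges from the paper at the very first step, and the divergence is where the gap lies. You reduce to a Schr\"odinger equation via $v=\gamma^{1/2}u$, accept that $q_\gamma=\gamma^{-1/2}\laplace(\gamma^{1/2})$ is only a distribution of negative order, and propose to handle it with Bourgain-type $X^b_s$ spaces in the style of Haberman--Tataru and Zhang. The paper deliberately avoids this reduction precisely because $\gamma\in W^{3/2+\eta,2n}(M)$ is not regular enough for it; instead, following Knudsen \cite{knu}, it keeps the first-order equation \eqref{condeq2} and conjugates by $e^{\phi_\tau/2}$, where $\phi_\tau$ is a \emph{smooth approximation} of $\log\gamma$ depending on the Carleman parameter. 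This leaves a small first-order term $\la A-A_\tau,\grad\ra_g$ plus a genuinely bounded potential $q_\tau$ with quantitative $L^\infty$ and $L^2$ decay in $\tau$, which are absorbed perturbatively by the resolvent $G_{0,\tau}$ of $-\Delta_{g,\tau}$ from \cite{ksau}. The payoff is the pointwise-in-$\tau$ remainder bound \eqref{remestimates}, valid for every large $\tau$ outside a countable set, which is exactly what the partial-data boundary argument consumes.

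The gap in your route is the one you yourself flag as ``the main obstacle'' and then leave open: the $X^b_s$ machinery yields remainder control only on average in $s$, while the unsigned boundary term on $\partial M_{+,\epsilon}$ in the Kenig--Sj\"ostrand--Uhlmann-type Carleman estimate must be absorbed for the \emph{same} values of the parameter at which the CGO remainders are small. Saying that ``the key technical step will be to prove a boundary Carleman estimate that gains sufficiently many powers of $s$'' is naming the missing theorem, not proving it; no mechanism is offered, and it is not known how to run this absorption with only averaged bounds in this partial-data geometry. There is a second, independent problem: the $X^b_s$ spaces of \cite{hab} and \cite{zhang} are built from the global Euclidean Fourier transform and from averaging over a continuum of complex frequencies, neither of which is available on $\R\times M_0^{int}$ with $M_0$ a simple manifold; asserting that the resolvent estimates ``inherit the flat Fourier structure in $x_1$ and the geodesic ray structure on $M_0$'' does not produce such spaces. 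The paper's smooth-approximation conjugation sidesteps both difficulties at once, which is why it, rather than the Bourgain-space route, is the engine of the proof. Your endgame (boundary Carleman estimate on $\partial M_{+,\epsilon}$, Alessandrini identity, attenuated geodesic ray transform on the simple factor, injectivity for small attenuation) does match the paper once the CGOs with uniform remainder bounds are in hand.
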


This theorem can be seen as a generalization of \cite{knu}.  As a corollary, we obtain a strengthening of Theorem 1.2
from \cite{ksu}.

\begin{thm}\label{thm2}
Let $\Omega \subset \R^n$ be a bounded domain with smooth boundary, and assume $x_0$ is not in the convex hull of $\overline{\Omega}$.
Given $\epsilon > 0$, define
\begin{align*}
\partial \Omega_{-,\epsilon} &= \{ x \in \partial \Omega : \partial_\nu \varphi(x) < \epsilon \}, \\
\partial \Omega_{+,\epsilon} &= \{ x \in \partial \Omega : \partial_\nu \varphi(x) \geq  \epsilon \}.
\end{align*}
where $\varphi(x) = \log|x-x_0|$. Let $\gamma_1,\gamma_2 \in W^{3/2+\eta,2n}(\Omega)$ for some $\eta > 0$. Suppose further that
$\gamma_1|_{\partial \Omega} = \gamma_2|_{\partial \Omega}$ and $\nmlder \gamma_1|_{\partial \Omega} = \nmlder \gamma_2|_{\partial \Omega}$.
Then if for some $\epsilon > 0$
\begin{align*}
\Lambda_{\gamma_1} (f)|_{\partial \Omega_{-,\epsilon}} = \Lambda_{\gamma_2} (f)|_{\partial \Omega_{-,\epsilon}} \mbox{ for any } f \in H^{1/2}(\partial \Omega),
\end{align*}
we have that $\gamma_1 = \gamma_2$.
\end{thm}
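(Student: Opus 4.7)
The plan is to reduce Theorem \ref{thm2} to Theorem \ref{thm1} by transforming $\Omega$ into an admissible manifold via the standard polar-type change of variables centered at $x_0$. Define $F: \R^n \setminus \{x_0\} \to \R \times S^{n-1}$ by $F(x) = (\log|x-x_0|, (x-x_0)/|x-x_0|)$. A direct calculation with $F^{-1}(t,\omega) = x_0 + e^t \omega$ shows that $(F^{-1})^* e = e^{2t}(e \oplus g_{S^{n-1}})$, so $F$ is a conformal diffeomorphism onto its image with explicit conformal factor $c = e^{2t}$, and the first coordinate on the target corresponds precisely to $\varphi(x) = \log|x-x_0|$ on the source.

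First, I will use the hypothesis that $x_0 \notin \mathrm{conv}(\overline{\Omega})$ to produce a simple manifold $(M_0, g_0)$ whose interior contains the angular image $K = \{(x-x_0)/|x-x_0| : x \in \overline{\Omega}\} \subset S^{n-1}$. A separating hyperplane argument, using that $\mathrm{conv}(\overline{\Omega})$ is closed and convex, shows that $K$ lies in a closed spherical cap of angular radius strictly less than $\pi/2$. One may then choose $M_0$ to be any closed geodesic ball in $S^{n-1}$ of radius $r < \pi/2$ whose interior contains $K$; such a ball is simple, since the exponential map from its center is a diffeomorphism and the boundary is a small geodesic sphere with positive definite second fundamental form. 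Setting $M = F(\overline{\Omega}) \subset \R \times M_0^{int}$ and $g = (F^{-1})^* e$ then exhibits $(M, g)$ as admissible.

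Next, I will transport the entire Calder\'on problem through $F$. Since $F: (\Omega, e) \to (M, g)$ is a Riemannian isometry, setting $\tilde\gamma_j := \gamma_j \circ F^{-1}$ and identifying Dirichlet data via $F$, the Euclidean conductivity equations for $\gamma_j$ correspond to the metric conductivity equations $\diver(\tilde\gamma_j \grad \tilde u) = 0$ on $M$, and the DtN maps are conjugate under this identification. Moreover, $\varphi = \log|x-x_0|$ pulls back to the first coordinate on the admissible side, and normal derivatives of $\varphi$ match because isometries preserve normals. Consequently the partition $\partial\Omega_{\pm,\epsilon}$ is carried to the partition $\partial M_{\pm,\epsilon}$ from Theorem \ref{thm1}, and the agreement of partial DtN data on $\partial\Omega_{-,\epsilon}$ translates to agreement on $\partial M_{-,\epsilon}$. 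Smoothness of $F$ near $\overline{\Omega}$ preserves the Sobolev class $W^{3/2+\eta,2n}$ and the boundary identifications $\tilde\gamma_1 = \tilde\gamma_2$, $\nmlder \tilde\gamma_1 = \nmlder \tilde\gamma_2$ on $\partial M$.

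Applying Theorem \ref{thm1} to $\tilde\gamma_1,\tilde\gamma_2$ on $(M, g)$ then yields $\tilde\gamma_1 = \tilde\gamma_2$, and pulling back by $F$ gives $\gamma_1 = \gamma_2$ on $\Omega$. The only genuine obstacle is the construction of the simple manifold $M_0$; this is precisely where the convex hull hypothesis enters, since without it the angular image would not fit inside a simple subdomain of the sphere. The remainder of the argument is bookkeeping for a change of variables that was essentially tailored by the definition of \emph{admissible} to accommodate the logarithmic weight.
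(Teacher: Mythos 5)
Your proposal is correct and follows essentially the same route as the paper: identify $(\Omega,e)$ via the map $x \mapsto (\log|x-x_0|,\ (x-x_0)/|x-x_0|)$ with a subset of $\R \times M_0^{int}$ carrying the metric $e^{2y_1}(e \oplus g_{S^{n-1}})$, use the convex hull hypothesis to place the angular image inside a closed spherical cap (a simple manifold), note that $\varphi$ becomes the first coordinate so the sets $\partial\Omega_{\pm,\epsilon}$ match $\partial M_{\pm,\epsilon}$, and invoke Theorem \ref{thm1}. The only difference is presentational: you spell out the separating hyperplane argument and the transport of the DtN data, which the paper compresses into a \lq\lq without loss of generality $\Omega \subseteq \{x_n > 0\}$'' normalization.
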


In proving Theorem \ref{thm1}, we combine ideas from \cite{knu}, \cite{dfks}, \cite{dfksu}, \cite{ksa}, and \cite{ksau}.  An outline of the proof is as follows.
Let $\gamma \in C^1(M)$ and let $u$ be a solution to \eqref{condeq1}.  After simplifying \eqref{condeq1}, we see that $u$
solves
\begin{align}
(-\Delta_g + \la A , \grad \ra_g) u = 0 \quad \mbox{in } M \label{condeq2}
\end{align}
where
\begin{align}
A = -\grad \log \gamma \label{Adef}.
\end{align}

We first construct a suitable family of complex geometrical optics (CGO) solutions to \eqref{condeq2} using ideas from \cite{knu} and \cite{ksau}.
Next, we use an Alessandrini type integral identity to relate information on the boundary to the attenuated geodesic ray transform of the conductivities.
A uniqueness result for the attenuated ray transform on simple manifolds allows us to conclude that the two conductivities agree on $M$.

The outline of this paper is as follows.  In Section 2, we obtain Carleman estimates for lower order perturbations of the Laplacian $\laplace$.
In Section 3, we construct CGO solutions to \eqref{condeq2}.  Finally, in Section 4 we prove Theorem \ref{thm1} and and Theorem \ref{thm2}.

\textbf{Acknowledgments:}  This work was completed during the author's doctoral studies.  The author would like
to thank his advisor, Carlos Kenig, for suggesting the problem and for his invaluable patience and guidance.

\section{Carleman Estimate}
Throughout the remainder of the paper, $(M,g)$ is an admissible manifold with $g = e \oplus g_0$ and $\varphi(x) = x_1$.  In particular, it suffices to prove Theorem \ref{thm1} in the case
$c = 1$.  This reduction follows easily from the relations
\begin{align}
\mbox{div}_{cg} \left (\gamma \nabla_{cg} u \right ) = c^{-\frac{n}{2}} \mbox{div}_g \left ( \tilde \gamma \grad u \right ), \notag
\end{align}
where
$$
\tilde \gamma = c^{\frac{n-2}{2}} \gamma,
$$
and
$$
\left (\partial_\nu u \right )_{cg} = c^{-1/2} \left (\partial_\nu u \right )_{g},
$$
where the subscripts indicate that the normal derivatives are taken with respect to $cg$ and $g$ respectively.
We also denote the inner product and norm (on tangent spaces) given by $g$ by $\la \cdot, \cdot \ra_g$ and $|\cdot|_g$ respectively.  Finally, the letter $C$ (with possible superscripts) denotes a generic positive constant which may change from line to line.
\begin{ppn}
There exist positive constants $\tau_0, C, C', C''$ such that for all $|\tau| \geq \tau_0$ and all
$v \in H^2(M)$ we have the
estimate
\begin{align}
C&\left ( \tau^2 \int_M |v|^2 \tvol + \int_M |\grad v |_{g}^2 \vol \right ) - C' \tau^2 \int_{\partial M} |v|^2 \hspace{2 pt} dS_{g} - C'' \int_{\partial M} \bar v \nmlder v \hspace{2 pt} dS_{g} \notag\\
&+ \int_{\partial M} \left ( 4 \tau \Re (\nmlder v \partial_1 \bar v) - 2 \tau (\nmlder \varphi) | \grad v |_{g}^2 + 2 \tau^3 (\nmlder \varphi) |v|^2 \right ) \hspace{2 pt} dS_{g} \notag\\
&\leq \int_M |e^{-\tau \varphi } (-\laplace) (e^{\tau \varphi} v) |^2 \tvol. \label{carlest1}
\end{align}
\end{ppn}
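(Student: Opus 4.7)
The plan is the classical conjugation approach for a Carleman estimate with a limiting Carleman weight. Since $\varphi(x) = x_1$ and $g = e \oplus g_0$ give $\grad\varphi = \partial_1$, $|\grad\varphi|_g \equiv 1$, and $\laplace\varphi \equiv 0$, a direct calculation yields
\[ L_\tau v := e^{-\tau\varphi}(-\laplace)(e^{\tau\varphi}v) = -\laplace v - 2\tau\,\partial_1 v - \tau^2 v. \]
I would split $L_\tau = P + Q$ with $Pv := -\laplace v - \tau^2 v$ formally self-adjoint on $L^2(M, dV_g)$ and $Qv := -2\tau\,\partial_1 v$ formally skew-adjoint there --- the latter using precisely that $\partial_1 |g|^{1/2} = 0$ in admissible coordinates. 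Then
\[ \|L_\tau v\|^2_{L^2(M)} = \|Pv\|^2_{L^2(M)} + \|Qv\|^2_{L^2(M)} + 2\Re\langle Pv, Qv\rangle_{L^2(M)}. \]

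The main computation is to identify the cross term with \emph{exactly} the boundary expression in the statement. Expand $2\Re\langle Pv, Qv\rangle$ as $4\tau\Re\!\int_M \laplace v\,\partial_1\bar v\,dV_g + 4\tau^3\Re\!\int_M v\,\partial_1\bar v\,dV_g$. The second piece is immediate from $2\Re(v\,\partial_1\bar v) = \partial_1|v|^2$ and the divergence theorem together with $\langle\partial_1,\nu\rangle_g = \partial_\nu\varphi$, producing the $2\tau^3(\partial_\nu\varphi)|v|^2$ boundary term. For the first piece, integrate by parts once to get the $4\tau\Re(\partial_\nu v\,\partial_1\bar v)$ boundary term plus a bulk term $-4\tau\Re\!\int_M\langle\grad v,\grad\partial_1\bar v\rangle_g\,dV_g$; the identity $\partial_1 g^{ij} = 0$ then gives $2\Re\langle\grad\partial_1 v,\grad\bar v\rangle_g = \partial_1|\grad v|^2_g$, and a second application of the divergence theorem converts this into the $-2\tau(\partial_\nu\varphi)|\grad v|^2_g$ boundary term. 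No bulk commutator survives (equivalently, $[\laplace,\partial_1] = 0$), which is the signature of $\varphi$ being a limiting Carleman weight. Rearranging yields the intermediate identity
\[ \|\laplace v + \tau^2 v\|^2_{L^2(M)} + 4\tau^2\|\partial_1 v\|^2_{L^2(M)} = \|L_\tau v\|^2_{L^2(M)} - [\text{boundary expression in the statement}]. \]

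To promote this to the full estimate for $\tau^2\|v\|^2 + \|\grad v\|^2_g$ I would combine two further inputs. First, since $M \rcompact \R \times M_0^{int}$ each $x_1$-slice of $M$ is a bounded interval, so a fiberwise one-dimensional Poincar\'e inequality (applied to each slice and integrated in the transverse variable) gives $\|v\|^2_{L^2(M)} \leq C\|\partial_1 v\|^2_{L^2(M)} + C\!\int_{\partial M}|v|^2\,dS_g$; multiplying by $\tau^2$ and combining with the intermediate identity controls $\tau^2\|v\|^2$ modulo a $C'\tau^2\!\int_{\partial M}|v|^2$ boundary term. Second, pairing $L_\tau v$ against $\bar v$ in $L^2(M)$ and integrating by parts gives the energy identity
\[ \|\grad v\|^2_{L^2(M)} = \tau^2\|v\|^2 + \tau\!\int_{\partial M}\!(\partial_\nu\varphi)|v|^2\,dS_g + \Re\!\int_{\partial M}\!\bar v\,\partial_\nu v\,dS_g + \Re\!\int_M\! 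L_\tau v\cdot\bar v\,dV_g, \]
and Cauchy--Schwarz applied to the last term together with the bound just obtained on $\tau^2\|v\|^2$ controls $\|\grad v\|^2_{L^2(M)}$ in the same fashion. Taking $\tau$ sufficiently large to absorb residual constants then produces the stated estimate. The main technical obstacle is bookkeeping through the two integrations by parts in the cross-term computation so that the coefficients $4\tau$, $-2\tau(\partial_\nu\varphi)$, and $2\tau^3(\partial_\nu\varphi)$ emerge in precisely the combination stated; in particular, the non-standard term $4\tau\Re(\partial_\nu v\,\partial_1\bar v)$ appears only when $\partial_1 g^{ij} = 0$ is invoked at the correct step.
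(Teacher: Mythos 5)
Your proposal is correct and follows essentially the same route as the paper: conjugate to get $-\laplace v - 2\tau\partial_1 v - \tau^2 v$, expand the $L^2$ norm into the squares of the symmetric and antisymmetric parts plus a cross term that reduces (since $\partial_1 g^{ij}=0$ and $\partial_1|g|^{1/2}=0$) to exactly the stated boundary expression, then control $\tau^2\|v\|^2$ by the Poincar\'e inequality with boundary term applied to $\|2\tau\partial_1 v\|^2$ and control $\|\grad v\|_g^2$ by pairing against $\bar v$ and absorbing via Cauchy--Schwarz for $|\tau|$ large. The only cosmetic differences are that the paper packages the cross-term computation as a single divergence identity rather than two successive integrations by parts, and pairs $(-\laplace-\tau^2)v$ rather than the full conjugated operator against $\bar v$ in the energy step; neither affects the result.
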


\begin{proof}
The proof is similar to the proof of Proposition 2.1 in \cite{knu}.
For $v \in H^2(M)$ we write
\begin{align*}
e^{-\tau x_1} (-\laplace) (e^{\tau x_1}v) = (-\laplace - \tau^2)v - 2 \tau \partial_1 v.
\end{align*}
Hence
\begin{align}
\int_M |e^{-\tau \varphi } (-\laplace) (e^{\tau \varphi} v) |^2 \tvol =& \int_M |(-\laplace - \tau^2)v|^2 \tvol + \int_M |2 \tau \partial v|^2 \tvol \notag \\
&+ 4 \tau \Re \int_M (\laplace + \tau^2) v )\overline{\partial_1 v} \tvol. \label{estone}
\end{align}
The Poincar\'e inequality with boundary term implies that
\begin{align}
C \tau^2 \int_M |v|^2 \tvol - C' \tau^2 \int_{\partial M} |v|^2 \hspace{2 pt} dS_{g} \leq \int_M | 2 \tau \partial_1 v |^2 \tvol. \notag
\end{align}
Note that
\begin{align*}
\int_M | \grad v |_{g}^2 \tvol - \int_{\partial M} \overline{v} \nmlder v \hspace{2 pt} dS_{g}
- \tau^2 \int_M |v|^2 \tvol &= \int_M \overline{v} (-\laplace - \tau^2 ) v \tvol \notag \\
&\leq \delta \int_M |v|^2 \tvol + \frac{1}{4 \delta} \int_M |(-\laplace - \tau^2)v|^2 \tvol \notag
\end{align*}
for arbitrary $\delta > 0$. Combining the above estimates and choosing $\delta > 0$ sufficiently small implies that
\begin{align}
C&\left ( \tau^2 \int_M |v|^2 \tvol + \int_M |\grad v |_{g}^2 \tvol \right ) - C' \tau^2 \int_{\partial M} |v|^2 \hspace{2 pt} dS_{g}
 - C'' \int_{\partial M} \bar v \nmlder v \hspace{2 pt} dS_{g} \notag \\
& \leq  \int_M |(-\laplace - \tau^2)v|^2 \tvol + \int_M | 2 \tau \partial_1 v |^2 \tvol. \label{secest}
\end{align}
for $|\tau| \geq \tau_0$.
Finally, a short calculation using local coordinates on $M_0$ shows that
\begin{align*}
4 \tau \Re ((\laplace + \tau^2)v \overline{\partial_1 v} ) = \diver \left [ 4 \tau \Re ( \partial_1 \overline v \grad v) - 2 \tau (\grad \varphi)
|\grad v |_{g}^2 + 2 \tau^3 (\grad \varphi) |v|^2 \right ],
\end{align*}
which by the divergence theorem implies that
\begin{align}
4 \tau \Re \int_M (\laplace + \tau^2) v \overline{\partial_1 v} \tvol &= \int_M \diver \left [ 4 \tau \Re ( \partial_1 \overline v \grad v) - 2 \tau (\grad \varphi)
|\grad v |_{g}^2 + 2 \tau^3 (\grad \varphi) |v|^2 \right ] \tvol \notag \\
&= \int_{\partial M} \left ( 4 \tau \Re (\partial_1 \bar v \nmlder v ) - 2 \tau (\nmlder \varphi) | \grad v |_{g}^2 + 2 \tau^3 (\nmlder \varphi)
|v|^2 \right ) \hspace{2 pt} dS_{g}. \label{lastest}
\end{align}
Combining \eqref{estone} with \eqref{secest} and \eqref{lastest} yields \eqref{carlest1}.
\end{proof}

For each value of the parameter $\tau$, let $q_\tau,A_\tau \in L^\infty(M)$.  An immediate corollary of the previous proposition is a Carleman estimate for the operator $-\laplace + \la A_\tau , \grad \ra_{g} + q_\tau$ as long as the norms $\| A_\tau \|_{L^\infty(M)}$ and $\| q_\tau \|_{L^\infty(M)}$ do not grow too
quickly in $\tau$.  The precise formulation is as follows.

\begin{cor}
For each value of the parameter $\tau \in \R$, let $q_\tau,A_\tau \in L^\infty(M)$ be such that
\begin{align*}
\| A_\tau \|_{L^\infty(M)}^2 + \tau^{-2} \| q_\tau \|_{L^\infty(M)}^2 = o(1), \quad \mbox{as } |\tau| \rightarrow \infty.
\end{align*}
Then there exist positive constants $\tau_0,C, C',C''$ with $\tau_0$ depending on the $o(1)$ term above such that for all $|\tau| \geq \tau_0$ and for all $v \in H^2(M)$ we have the estimate
\begin{align}
C&\left ( \tau^2 \int_M |v|^2 \tvol + \int_M |\grad v |_{g}^2 \tvol \right ) - C' \tau^2 \int_{\partial M} |v|^2 \hspace{2 pt} dS_{g} - C'' \int_{\partial M} \bar v \nmlder v \hspace{2 pt} dS_{g} \notag\\
&+ \int_{\partial M} \left ( 4 \tau \Re (\nmlder v \partial_1 \bar v - 2 \tau (\nmlder \varphi) | \grad v |_{g}^2 + 2 \tau^3 (\nmlder \varphi) |v|^2 \right ) \hspace{2 pt} dS_{g} \notag\\
&\leq \int_M \left |e^{-\tau \varphi } \left (-\laplace + \la A, \grad \ra_{g} + q \right ) (e^{\tau \varphi} v) \right |^2 \tvol. \label{carlest2}
\end{align}
\end{cor}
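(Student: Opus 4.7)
The plan is to reduce \eqref{carlest2} to the unperturbed Carleman estimate \eqref{carlest1} by expanding the conjugated perturbed operator and absorbing the resulting lower order contribution into the positive interior terms $\tau^2 \int_M |v|^2 \vol + \int_M |\grad v|_g^2 \vol$ appearing on the left. First, using $e^{-\tau\varphi}\grad(e^{\tau\varphi}v) = \grad v + \tau v \grad\varphi$, I would write
$$e^{-\tau\varphi}\left(-\laplace + \la A_\tau, \grad \ra_g + q_\tau\right)(e^{\tau\varphi}v) = e^{-\tau\varphi}(-\laplace)(e^{\tau\varphi}v) + R_\tau(v),$$
where $R_\tau(v) = \la A_\tau, \grad v\ra_g + \tau v \la A_\tau, \grad\varphi\ra_g + q_\tau v$. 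Since $\varphi(x) = x_1$ is smooth and $M$ is compact, $|\grad\varphi|_g$ is uniformly bounded, so a pointwise Cauchy--Schwarz argument gives
$$|R_\tau(v)|^2 \leq C \|A_\tau\|_{L^\infty(M)}^2 \, |\grad v|_g^2 + C\left(\tau^2 \|A_\tau\|_{L^\infty(M)}^2 + \|q_\tau\|_{L^\infty(M)}^2\right) |v|^2.$$

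Next, writing $F_\tau(v)$ for the expression whose square is integrated on the right-hand side of \eqref{carlest2}, the identity $e^{-\tau\varphi}(-\laplace)(e^{\tau\varphi}v) = F_\tau(v) - R_\tau(v)$ together with $|a-b|^2 \leq 2|a|^2 + 2|b|^2$ and the preceding proposition implies that the left-hand side of \eqref{carlest1} is bounded by
$$2\int_M |F_\tau(v)|^2 \vol + 2\int_M |R_\tau(v)|^2 \vol.$$
Integrating the pointwise bound on $|R_\tau(v)|^2$ then controls the second term by a constant multiple of $\|A_\tau\|_{L^\infty(M)}^2 \int_M |\grad v|_g^2 \vol + \bigl(\tau^2 \|A_\tau\|_{L^\infty(M)}^2 + \|q_\tau\|_{L^\infty(M)}^2\bigr)\int_M |v|^2 \vol$.

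Finally, the hypothesis $\|A_\tau\|_{L^\infty(M)}^2 + \tau^{-2}\|q_\tau\|_{L^\infty(M)}^2 = o(1)$ is calibrated so that each of these two factors equals $o(1)$ times one of the two positive interior terms on the left of \eqref{carlest1}. For $|\tau| \geq \tau_0$ sufficiently large, with $\tau_0$ depending only on the decay rate in the $o(1)$ condition, the $R_\tau$-contribution is absorbed into the interior part of the left-hand side at the cost of shrinking $C$; relabeling constants produces \eqref{carlest2}, and the boundary terms carry through unchanged since the perturbation only contributes to volume integrals. I do not foresee a genuine obstacle here; the only bookkeeping worth checking is that the extra factor of $\tau$ generated by differentiating $e^{\tau\varphi}$ against $A_\tau$ becomes, upon squaring, precisely the $\tau^2 |v|^2$ term that the $\tau^{-2}$ normalization on $\|q_\tau\|_{L^\infty}$ in the hypothesis is designed to match.
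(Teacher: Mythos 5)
Your argument is correct and is precisely the absorption argument the paper has in mind: the paper offers no written proof, declaring the corollary ``immediate'' from the preceding proposition, and your expansion of the conjugated perturbed operator, the pointwise bound on $R_\tau(v)$, and the calibration of the $o(1)$ hypothesis against the interior terms $\tau^2\int_M|v|^2\,dV_g+\int_M|\nabla_g v|_g^2\,dV_g$ supply exactly the missing details. The only (immaterial) bookkeeping point is that the elementary inequality $|a|^2\le 2|a+b|^2+2|b|^2$ halves the explicit boundary term along with everything else, so the coefficients $4\tau,\,2\tau,\,2\tau^3$ survive only up to a harmless overall constant --- which is how the estimate is used later anyway.
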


\section{CGO Solutions}

The main method for constructing CGO solutions to \eqref{condeq1} and \eqref{condeq2} is to first conjugate in the equation $\gamma^{1/2}$, i.e., $u$ solves
\eqref{condeq1} if and only if $v = \gamma^{-1/2}u$ satisfies
\begin{align}
(-\laplace + q)v = 0. \notag
\end{align}
where $q = \gamma^{-1/2}\Delta_g \left (\gamma^{1/2} \right )$.  If $(M,g)$ is an admissible manifold, then constructing CGO solutions to \eqref{condeq1} and \eqref{condeq2}
is therefore reduced to constructing CGO solutions to a zeroth order perturbation of the Laplacian $\laplace = \partial_1^2 + \Delta_{g_0}$.  However, the first step requires more smoothness that we assume on $\gamma$.  Following \cite{knu}, we proceed by introducing a smooth approximation
of $\gamma$ and conjugate the equation with the approximation.

Let $\gamma \in W^{3/2 + \eta, 2n} (M)$ be positive.  We first extend $\gamma$ to a function outside $M$ such that $\gamma - 1 \in W^{3/2+\eta, 2n}_0(\R \times M_0^{int})$.  Let $\phi = \log \gamma$ and $A = \grad \phi$.
We recall the following approximation result for functions $W^{3/2+\eta, 2n}_0(\R \times M_0^{int})$ from \cite{knu} (see Lemma 3.1).
\begin{lem}
Suppose $\gamma \in W^{3/2+\eta, 2n}(M)$ for some $\eta > 0$.  Then there exists a family of $C^\infty_0(\R \times M_0^{int})$ functions $\{ \phi_\tau \}_{\tau > 0}$ such that if $A_\tau = \grad \phi_\tau$ then as $\tau \rightarrow \infty$
\begin{align}
\| A - A_\tau \|_{L^\infty(M)} &= o(1), \label{AAtaulinfty}\\
\| \diver A_\tau \|_{L^\infty(M)} &= o(\tau), \label{diverlinfty}
\end{align}
and
\begin{align}
\| A - A_\tau \|_{L^2(M)} &= o(\tau^{-1/2-\eta}), \label{AAtaul2} \\
 \| \diver A_\tau \|_{L^2(M)} &= o(\tau), \label{diverl2}
\end{align}
Moreover,
\begin{align}
\| \gamma - e^{\phi_\tau} \|_{L^\infty(M)} &= O(\tau^{-1-\eta}), \label{pptaulinfty}\\
\| \grad(\gamma - e^{\phi_\tau}) \|_{L^\infty(M)} &= O(\tau^{-\eta}). \label{gradpptaulinfty}
\end{align}
\end{lem}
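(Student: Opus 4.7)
The plan is to construct $\phi_\tau$ as a smooth regularization of $\phi=\log\gamma$ at frequency scale $\tau$. After the extension hypothesized in the paper, $\phi$ is compactly supported in $\R\times M_0^{int}$ (since $\gamma\equiv 1$ off a compact set). Using the simplicity of $(M_0,g_0)$, the exponential map at a fixed interior point identifies $M_0^{int}$ with an open subset of $\R^{n-1}$; together with the $x_1$ factor this realises $\phi$ as a compactly supported function on an open set in $\R^n$ carrying a smooth metric, so Euclidean mollifier estimates transfer to $M$ up to multiplicative constants. Fix a radially symmetric $\psi\in C^\infty_c(\R^n)$ with $\int\psi=1$, set $\psi_\tau(x)=\tau^n\psi(\tau x)$, and let $\phi_\tau=\chi\cdot(\phi*\psi_\tau)$ where $\chi\in C^\infty_c(\R\times M_0^{int})$ equals one on a neighborhood of $M$, so $\phi_\tau\in C^\infty_c(\R\times M_0^{int})$.

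The pointwise bounds come from Sobolev embedding on the $n$-manifold $M$, which gives $W^{3/2+\eta,2n}\embed C^{1,\eta}$ and $W^{1/2+\eta,2n}\embed C^{0,\eta}$; in particular $\phi\in C^{1,\eta}$ and $A\in C^{0,\eta}$. Using $\int\psi_\tau=1$, $\int y\,\psi_\tau(y)\,dy=0$, and Taylor's theorem applied to $\phi\in C^{1,\eta}$,
\[
|\phi(x)-\phi_\tau(x)|\lesssim \int|\psi_\tau(y)|\,|y|^{1+\eta}\,dy=O(\tau^{-1-\eta}),
\]
which yields \eqref{pptaulinfty} after composing with $e^{(\cdot)}$. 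The analogous H\"older computation for $A\in C^{0,\eta}$ gives $\|A-A_\tau\|_{L^\infty}=O(\tau^{-\eta})=o(1)$, proving \eqref{AAtaulinfty}; expanding $\grad(e^\phi-e^{\phi_\tau})$ and combining these two bounds gives \eqref{gradpptaulinfty}. For \eqref{diverlinfty}, write in local coordinates
\[
\partial(A*\psi_\tau)(x)=\int\bigl(A(x-y)-A(x)\bigr)\partial\psi_\tau(y)\,dy
\]
(using $\int\partial\psi_\tau=0$), and combine $|A(x-y)-A(x)|\lesssim|y|^\eta$ with $\|\partial\psi_\tau\|_{L^1}=O(\tau)$ to obtain $\|\partial A_\tau\|_{L^\infty}=O(\tau^{1-\eta})=o(\tau)$; since the metric is smooth this yields $\|\diver A_\tau\|_{L^\infty}=o(\tau)$, and \eqref{diverl2} follows by compactness of $M$.

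The main obstacle is the strict little-$o$ rate in \eqref{AAtaul2}: a direct mollifier argument for $A\in W^{1/2+\eta,2}$ only gives the big-$O$ bound $O(\tau^{-1/2-\eta})$. To upgrade to little-$o$ we reinterpret $\phi_\tau$ as a Littlewood--Paley truncation at frequency $\tau$ (equivalently, take $\psi$ with compactly supported Fourier transform, absorbing the support truncation $\chi$ into lower-order errors). Since $L^{2n}\embed L^2$ on compact support, after extension $A\in H^{1/2+\eta}(\R^n)$, so the series $\sum_j 2^{(1+2\eta)j}\|P_jA\|_{L^2}^2$ converges, and its tail gives
\[
\|A-A_\tau\|_{L^2}^2\lesssim\sum_{2^j>\tau}\|P_jA\|_{L^2}^2\leq\tau^{-1-2\eta}\sum_{2^j>\tau}2^{(1+2\eta)j}\|P_jA\|_{L^2}^2=\tau^{-1-2\eta}\,o(1),
\]
which is \eqref{AAtaul2}. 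This is precisely where the strict inequality $\eta>0$ matters: the pointwise bounds use H\"older continuity of $A$, while the $L^2$ bound needs a summable Littlewood--Paley tail at exponent $1+2\eta$, forcing the $3/2^+$ regularity threshold.
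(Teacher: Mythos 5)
The paper does not actually prove this lemma: it quotes it as Lemma 3.1 of \cite{knu} and only remarks that a partition of unity transfers the Euclidean statement to $(M,g)$. Your mollification construction is precisely the argument underlying that citation --- smooth $\log\gamma$ at scale $\tau^{-1}$, derive the $O$--rates from the embedding $W^{3/2+\eta,2n}\hookrightarrow C^{1,\eta}$ together with the vanishing first moment of the kernel, and upgrade the $L^2$ bound to little--$o$ from the vanishing high--frequency tail of $A\in H^{1/2+\eta}$ --- transplanted to a single global chart via the exponential map rather than a partition of unity, and it is correct. The one point to make explicit is that a single fixed kernel must serve all six estimates simultaneously: choosing $\psi$ with $\hat\psi$ radial, compactly supported, and identically $1$ near the origin gives at once the vanishing first moment, the finiteness of $\int |y|^{1+\eta}|\psi|$ and $\int |y|^{\eta}|\partial\psi|$, and the exact frequency truncation used for \eqref{AAtaul2}, so your two descriptions of $\phi_\tau$ are consistent; you also implicitly use the standard product estimate showing $A=\gamma^{-1}\grad\gamma\in H^{1/2+\eta}$, which is worth a line.
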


We remark that Lemma 3.1 from \cite{knu} is stated for $M_0^{int} = \R^n$ with the Euclidean gradient and divergence.
By a standard partition of unity argument, these estimates carry over to the current setting with the gradient and
divergence defined using $g$.

The CGO solutions to the equation \eqref{condeq1} we will construct are of the form
\begin{align}
u(x) = e^{-\phi_\tau/2} e^{- \tau x_1} \left ( v_\tau + \tilde r \right )
\end{align}
where $\psi, v_\tau \in C^\infty(M)$ will be chosen and $\tilde r$ is a remainder term that tends to zero in a suitable norm as $|\tau| \rightarrow \infty$ (along a sequence).
For the construction, fix a slightly larger simple manifold $\tilde M_0$ such that $M_0 \subseteq \tilde M_0^{int}$. We denote the determinant
of the metric $g_0$ by $|g_0|$.  The main result of the section is the following.

\begin{ppn}\label{CGO}
Suppose $\gamma \in W^{3/2+\eta,2n}(M)$, $\eta > 0$.  Let $\omega \in \tilde M_0 \setminus M_0$ be a fixed point, $\lambda \in \R$ be fixed, and let $b \in C^{\infty}(S^{n-2})$.  Write $x = (x_1,r,\theta)$
where $(r,\theta)$ are polar normal coordinates with center $\omega$.  For $|\tau|$ sufficiently large outside a countable set, there exists
a solution $u \in H^1(M)$ of $\diver(\gamma \grad u ) = 0$ in $M$ of the form
\begin{align*}
u = e^{-\phi_\tau/2} e^{-\tau x_1} \left ( e^{-i\tau r} |g_0|^{-1/4} e^{i \lambda(x_1 + ir)}b(\theta) + \tilde r \right )
\end{align*}
where $\tilde r$ satisfies
\begin{align}
\| \tilde r \|_{H^s(M)} = o(\tau^{-1/2 - \eta + s}), \quad 0 \leq s \leq 2. \label{remestimates}
\end{align}
\end{ppn}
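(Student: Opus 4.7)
The plan is to construct $\tilde r$ by the Carleman-duality scheme, combining Knudsen's low-regularity technique \cite{knu} with the admissible-manifold CGO amplitudes of \cite{dfksu} and \cite{ksau}, and using Corollary~2.3 as the solvability tool. First, substitute $u = e^{-\phi_\tau/2} w$ into $\diver(\gamma \grad u) = 0$; writing $\gamma = e^{\phi}$ with $A = \grad \phi$, a direct calculation yields
$$
\bigl(-\laplace + \la X_\tau, \grad\, \cdot\, \ra_g + q_\tau\bigr) w = 0,
$$
where $X_\tau = A_\tau - A$ and $q_\tau = \tfrac14 |A|_g^2 - \tfrac14 |A - A_\tau|_g^2 + \tfrac12 \diver A_\tau$. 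Since $W^{3/2+\eta, 2n}(M) \hookrightarrow C^1(\overline{M})$, we have $A \in L^\infty(M)$, and combining this with the $L^\infty$ estimates of Lemma~3.1 gives $\|X_\tau\|_{L^\infty}^2 + \tau^{-2}\|q_\tau\|_{L^\infty}^2 = o(1)$, so that Corollary~2.3 applies to this operator.

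Next, seek $w = e^{-\tau x_1}(v_\tau + \tilde r)$ with $v_\tau = |g_0|^{-1/4} e^{i\lambda(x_1 + ir)} b(\theta) e^{-i\tau r}$, and let $P_\tau = e^{\tau x_1}\bigl(-\laplace + \la X_\tau, \grad\,\cdot\,\ra_g + q_\tau\bigr) e^{-\tau x_1}$. The equation for the remainder reduces to $P_\tau \tilde r = f_\tau := -P_\tau v_\tau$. The amplitude $v_\tau$ is engineered so that in $P_\tau v_\tau$ the $\tau^2$ and $\tau^1$ contributions from the conjugated Laplacian cancel: the complex phase $x_1 + ir$ is a limiting Carleman weight on the admissible manifold (the complex eikonal equation is satisfied), $|g_0|^{-1/4}$ is precisely the WKB amplitude solving the transport equation $2\partial_r a + a\, \partial_r \log\sqrt{|g_0|} = 0$ in polar normal coordinates on $(M_0, g_0)$, and $e^{i\lambda(x_1 + ir)} b(\theta)$ is holomorphic in the eikonal variable with an arbitrary angular profile, contributing no further $\tau$-growth. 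Using the $L^2$ bounds of Lemma~3.1, together with an interpolated $L^4$ bound $\|A - A_\tau\|_{L^4}^2 = o(\tau^{-1/2 - \eta})$ for the $|A - A_\tau|_g^2$ contribution, then gives the source estimate $\|f_\tau\|_{L^2(M)} = o(\tau^{1/2 - \eta})$.

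To solve $P_\tau \tilde r = f_\tau$, apply Corollary~2.3 to the formal adjoint with weight replaced by $-\tau$ and invoke Hahn--Banach in the standard way: for $|\tau|$ large outside a countable exceptional set (from the possible non-injectivity of the operator) one obtains $\tilde r \in L^2(M)$ with $\|\tilde r\|_{L^2} \lesssim \tau^{-1} \|f_\tau\|_{L^2} = o(\tau^{-1/2 - \eta})$. Testing the equation against $\tilde r$ and using the gradient term in the Carleman estimate yields $\|\tilde r\|_{H^1} = o(\tau^{1/2 - \eta})$, and elliptic regularity applied to
$$
-\laplace \tilde r = f_\tau - 2\tau \partial_1 \tilde r + \tau^2 \tilde r - \la X_\tau, \grad \tilde r \ra_g + \tau X_\tau^1 \tilde r - q_\tau \tilde r
$$
gives $\|\tilde r\|_{H^2} = o(\tau^{3/2 - \eta})$, interpolating to the full range $0 \le s \le 2$. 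The main obstacle I anticipate is the sharp $L^2$ bound on $f_\tau$: the contributions $(\diver A_\tau) v_\tau$ and $\tau X_\tau^1 v_\tau$ are only naively of size $o(\tau)$, so achieving the required $o(\tau^{1/2 - \eta})$ rate will need careful simultaneous use of the $L^2$ and $L^\infty$ estimates in Lemma~3.1 and most likely a subleading corrector $\tau^{-1} v_\tau^{(1)}$ solving an inhomogeneous transport equation (as in \cite{knu}) that absorbs the leading $(\diver A_\tau) v_\tau$ term before the Carleman--duality step is applied.
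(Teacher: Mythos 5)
Your overall skeleton matches the paper's: conjugate by $e^{-\phi_\tau/2}$ and $e^{\tau x_1}$, take the amplitude $|g_0|^{-1/4}e^{i\lambda(x_1+ir)}b(\theta)e^{-i\tau r}$ from \cite{dfksu} so that $\|e^{i\tau\psi}(-\Delta_{g,\tau})(e^{-i\tau\psi}a)\|_{L^2}=O(1)$, and bound the source in $L^2$ by $o(\tau^{1/2-\eta})$ using the $L^2$ estimates of Lemma 3.1. On that last point your anticipated obstacle is a non-issue: no subleading corrector is needed, because $\|\diver A_\tau\|_{L^2}$ and $\tau\|A-A_\tau\|_{L^2}$ are already $o(\tau^{1/2-\eta})$ by \eqref{AAtaul2}--\eqref{diverl2} (this is exactly why Lemma 3.1 records $L^2$ bounds separately from $L^\infty$ bounds), and the paper estimates $\|f\|_{L^2}=o(\tau^{1/2-\eta})$ directly.

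The genuine gap is in your solvability step. You solve $P_\tau\tilde r=f_\tau$ by Carleman duality (Hahn--Banach applied to the adjoint estimate), whereas the paper uses the \emph{explicit} right inverse $G_{0,\tau}$ of $-\Delta_{g,\tau}$ from \cite{ksau} (Proposition 3.3), built from the eigenfunction expansion on the transversal manifold, followed by a Neumann series for the first- and zeroth-order perturbation. This difference matters for two reasons. First, the countable excluded set of $\tau$ has nothing to do with ``possible non-injectivity'' reachable by the Carleman estimate (which holds for all large $\tau$); it comes from resonances of $\tau^2$ with the Dirichlet spectrum of $(M_0,g_0)$ in the construction of $G_{0,\tau}$ -- in a pure duality argument no such set would appear, which signals that you are not actually using the mechanism that produces it. Second, and more seriously, duality only delivers an $L^2$ solution with $\|\tilde r\|_{L^2}\lesssim\tau^{-1}\|f_\tau\|_{L^2}$; the solution carries no boundary condition, so your proposed upgrade -- ``testing against $\tilde r$'' (which generates uncontrolled boundary terms on $\partial M$) and then global elliptic regularity for $-\laplace\tilde r=\cdots$ (which requires boundary data) -- does not yield the stated $H^s$ bounds up to $s=2$ with the $\tau^{-1+s}$ gain. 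This is not a cosmetic endpoint: the proof of Lemma \ref{rhsofbdyident} uses $\|\tilde r_2\|_{H^{3/2+\delta}(M)}$ via the trace theorem, so \eqref{remestimates} for $s>3/2$ is essential. The fix is to use $G_{0,\tau}:L^2(M)\to H^2(M)$ with $\|G_{0,\tau}\|_{L^2\to H^s}\leq C|\tau|^{-1+s}$, $0\leq s\leq 2$, as the paper does, from which all the norm estimates for $\tilde r=e^{-i\tau\psi}G_\tau f$ follow at once.
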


For the proof, we will need the following conjugation relations which are straightforward calculations:
\begin{align*}
e^{\phi_\tau /2} ( -\laplace + \la A , \grad \ra_g ) e^{-\phi_\tau /2 } = - \laplace + \la (A - A_\tau), \grad \ra_g + q_{\tau}
\end{align*}
where
\begin{align*}
q_{\tau} = -\frac{1}{2}\diver A_\tau - \frac{1}{4} | A_\tau |^2_g - \frac{1}{2} \la A, A_\tau \ra_g,
\end{align*}
and
\begin{align*}
e^{-\tau x_1}(- \laplace + \la A - A_\tau, \grad \ra_{g} + q_{\tau}) e^{\tau x_1} = - \Delta_{g,\tau} + \la A - A_\tau, \grad \ra_{g} +  \tilde q_\tau,
\end{align*}
where
\begin{align*}
-\Delta_{g,\tau} &= - \laplace - \tau^2 - 2 \tau \partial_1, \\
\tilde q_\tau &=  q_{\tau} + \tau \la A - A_\tau, \grad \varphi \ra_{g}.
\end{align*}
Note that by \eqref{diverlinfty} and \eqref{diverl2} we have that
\begin{align}
\| q_\tau \|_{L^\infty(M)} = \| \tilde q_\tau \|_{L^\infty(M)} &= o(|\tau|), \label{potlinfty}\\
\| q_\tau \|_{L^2(M)} = \| \tilde q_\tau \|_{L^2(M)} &= o(|\tau|^{1/2-\eta}), \label{potl2}
\end{align}
as $|\tau| \rightarrow \infty$.

If we take as our ansatz $v_\tau = e^{-i \tau \psi} a$ for some $a \in C^\infty(M)$, then we have that $u$ solves \eqref{condeq1} if and only if
\begin{align}
(- \Delta_{g,\tau} + \la  A - A_\tau, \grad \ra_{g} + \tilde q_\tau ) (e^{i \tau \psi} \tilde r) = f, \label{remaindereq}
\end{align}
where $f = e^{i \tau \psi} (- \Delta_{g,\tau} + \la  A - A_\tau, \grad \ra_{g} + \tilde q_\tau) (e^{-i \tau \psi} a)$.  It was established in \cite{ksau} that $-\Delta_{g, \tau}$
has a bounded right inverse in appropriate spaces with norm decaying like $|\tau|^{-1}$ for $\tau$ outside a discrete set.  In particular, the following proposition holds.

\begin{ppn} For $|\tau| \geq 4$ outside a discrete set, there is a linear operator $G_{0,\tau} : L^2(M) \rightarrow H^2(M)$ such that
\begin{align*}
-\Delta_{g, \tau} G_{0,\tau} v = v \quad \mbox{for } v \in L^2(M).
\end{align*}
This operator satisfies
\begin{align*}
\|G_{0,\tau} f \|_{H^s(M)} \leq C |\tau|^{-1+s} \| f \|_{L^2(M)}, \quad 0 \leq s \leq 2.
\end{align*}
\end{ppn}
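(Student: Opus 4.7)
The plan is to exploit the product structure $g = e \oplus g_0$, extending to a slightly larger cylinder and separating variables. First I would fix an orthonormal basis $\{\phi_k\}_{k \geq 1}$ of $L^2(\tilde M_0, dV_{g_0})$ consisting of Dirichlet eigenfunctions of $-\Delta_{g_0}$ on $\tilde M_0$ with eigenvalues $0 < \mu_1 \leq \mu_2 \leq \cdots \to \infty$. Any $f \in L^2(M)$ may be extended by zero to $\R \times \tilde M_0$ and expanded as $f(x_1, y) = \sum_k f_k(x_1) \phi_k(y)$, with each $f_k$ compactly supported in $x_1$. Since
\begin{align*}
-\Delta_{g,\tau} = -\pl_1^2 - \Delta_{g_0} - \tau^2 - 2\tau \pl_1,
\end{align*}
this operator acts diagonally in the $\phi_k$ basis as the one-dimensional operators $L_{k,\tau} := -\pl_1^2 - 2\tau \pl_1 + (\mu_k - \tau^2)$, whose Fourier symbol in $x_1$ is $(\xi - i\tau)^2 + \mu_k$.

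The natural candidate for the inverse is the Fourier multiplier
\begin{align*}
\widehat{(G_{0,\tau} f)_k}(\xi) = \frac{\hat f_k(\xi)}{(\xi - i\tau)^2 + \mu_k},
\end{align*}
followed by restriction to $M$. Expanding $|(\xi - i\tau)^2 + \mu_k|^2 = (\xi^2 + \mu_k - \tau^2)^2 + 4\tau^2 \xi^2$ shows that the symbol vanishes on $\xi \in \R$ precisely when $\xi = 0$ and $\tau^2 = \mu_k$; hence the excluded ``discrete set'' should be $\{\pm \sqrt{\mu_k}\}_{k \geq 1}$. The entire argument then reduces to proving the uniform multiplier estimate
\begin{align*}
(1 + \xi^2 + \mu_k)^s \, |(\xi - i\tau)^2 + \mu_k|^{-2} \leq C \, |\tau|^{2s - 2}, \quad 0 \leq s \leq 2,
\end{align*}
for $|\tau| \geq 4$ staying suitably far from the excluded set; Plancherel in $x_1$ together with orthonormality of $\{\phi_k\}$ then converts this into the $L^2 \to H^s$ bound on the cylinder, and restriction to $M$ preserves it.

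To establish the multiplier estimate I would split into three regimes. When $\mu_k \geq 2\tau^2$, the real part $\xi^2 + \mu_k - \tau^2 \geq \mu_k/2$ controls the denominator while the numerator obeys $1 + \xi^2 + \mu_k \lesssim \mu_k + \xi^2$. When $\mu_k \leq \tau^2/2$, one separates $|\xi| \gtrsim |\tau|$, where the $4\tau^2 \xi^2$ term dominates, from $|\xi| \ll |\tau|$, where $|\xi^2 + \mu_k - \tau^2| \asymp \tau^2$ dominates. The delicate case is $\mu_k \sim \tau^2$: near $\xi = 0$ both summands of the denominator become simultaneously small, and only the distance of $\tau^2$ from the spectrum $\{\mu_k\}$ prevents the estimate from collapsing. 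This is the main technical obstacle and is exactly what forces the discrete-set exclusion. Interpolating the $s = 0$ and $s = 2$ endpoints, or arguing directly at each $s \in [0,2]$, then produces the full range, after which standard extension/restriction between the cylinder and $M$ recovers the intrinsic $H^s(M)$-norm claimed in the proposition.
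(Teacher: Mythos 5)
The paper offers no proof of this proposition at all: it is imported verbatim from \cite{ksau} (``It was established in \cite{ksau} that\dots''), so the only question is whether your sketch would stand on its own. It would not. The entire argument is reduced to the uniform multiplier bound $(1+\xi^2+\mu_k)^s\,|(\xi-i\tau)^2+\mu_k|^{-2}\le C|\tau|^{2s-2}$, and this bound is false in precisely the regime you flag as delicate; excluding the discrete set $\{\pm\sqrt{\mu_k}\}$ does not repair it. Writing $t=\xi^2$ and $h(t)=(t+\mu_k-\tau^2)^2+4\tau^2 t$, one has $h'(t)=2t+2\mu_k+2\tau^2>0$, so $\inf_{\xi\in\R}|(\xi-i\tau)^2+\mu_k|=|\mu_k-\tau^2|$, attained at $\xi=0$. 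Your $s=0$ estimate therefore forces $|\mu_k-\tau^2|\ge c|\tau|$ for every $k$, i.e.\ $\bigl||\tau|-\sqrt{\mu_k}\bigr|\ge c'$ uniformly in $k$ with $\mu_k\sim\tau^2$. By the Weyl law on the $(n-1)$--dimensional manifold $\tilde M_0$ with $n-1\ge 2$, the number of $\sqrt{\mu_k}$ in a unit interval around $|\tau|$ grows like $|\tau|^{n-2}$, so consecutive gaps shrink to zero and, for a general simple $M_0$, no large $\tau$ satisfies the condition. Even when the spectrum is degenerate enough to leave gaps, the bad set $\{\tau:\mathrm{dist}(|\tau|,\{\sqrt{\mu_k}\})<c'\}$ is an infinite union of nondegenerate open intervals, never a discrete set. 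So the conclusion ``for all $|\tau|\ge 4$ outside a discrete set, with a uniform $C$'' cannot be reached along this route.

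The missing idea is that the near-resonant modes must be inverted on a bounded $x_1$--interval containing the projection of $M$, not on all of $\R$. Factor the one-dimensional operator as $-(\partial_1+\tau-\sqrt{\mu_k})(\partial_1+\tau+\sqrt{\mu_k})$ and invert each first-order factor $\partial_1+a$ by the one-sided Duhamel integral $u(x_1)=\int e^{-a(x_1-s)}f(s)\,ds$ taken from the appropriate endpoint of the interval according to the sign of $a$; on an interval of length $L$ this has $L^2\to L^2$ norm $\lesssim\min(|a|^{-1},L)$, so the composition is $\lesssim L(\tau+\sqrt{\mu_k})^{-1}\lesssim|\tau|^{-1}$ even at exact resonance $\mu_k=\tau^2$. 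The slowly decaying one-sided exponential that makes the global Fourier multiplier blow up is harmless on a compact set; equivalently, one can periodize in $x_1$ and work on a frequency lattice avoiding $\xi=0$. Your closing remark that ``restriction to $M$ preserves'' the cylinder bound is exactly where the loss is hidden: restriction preserves an estimate you cannot prove, whereas the compactness of the $x_1$--projection must be used \emph{before} the estimate, in the construction of the one-dimensional inverses, as is done in \cite{ksau}.
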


Proceeding perturbatively, we obtain an inverse for the operator $ - \Delta_{g,\tau} + \la A - A_\tau, \grad \ra_{g} + \tilde q_\tau$.

\begin{cor}There exists a $\tau_0 > 0$ such that for $|\tau| \geq \tau_0$ outside a discrete set, there is a linear operator $G_\tau : L^2(M) \rightarrow H^2(M)$ such that
\begin{align*}
\left (- \Delta_{g,\tau} + \la A - A_\tau, \grad \ra_{g} + \tilde q_\tau \right ) G_\tau v = v \quad \mbox{for } v \in L^2(M).
\end{align*}
This operator satisfies
\begin{align*}
\|G_\tau f \|_{H^s(M)} \leq C |\tau|^{-1+s} \| f \|_{L^2(M)}, \quad 0 \leq s \leq 2.
\end{align*}
\end{cor}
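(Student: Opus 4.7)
The plan is a standard Neumann-series perturbation of $G_{0,\tau}$, which is essentially forced by the form of the estimates at our disposal. Write $P_\tau := -\Delta_{g,\tau} + \la A - A_\tau, \grad \cdot \ra_{g} + \tilde q_\tau$ and split it as $P_\tau = -\Delta_{g,\tau} + B_\tau$, where the lower-order perturbation is
\begin{equation*}
B_\tau u := \la A - A_\tau, \grad u \ra_{g} + \tilde q_\tau\, u.
\end{equation*}
The formal candidate for a right inverse is then $G_\tau := G_{0,\tau}(I + B_\tau G_{0,\tau})^{-1}$, and the whole argument reduces to showing that $I + B_\tau G_{0,\tau}$ is invertible on $L^2(M)$ for $|\tau|$ large (outside the discrete set on which $G_{0,\tau}$ is defined).

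First I would estimate the operator norm of $B_\tau G_{0,\tau}$ on $L^2(M)$. For $h \in L^2(M)$, the triangle inequality gives
\begin{equation*}
\| B_\tau G_{0,\tau} h \|_{L^2(M)} \leq \| A - A_\tau \|_{L^\infty(M)} \| \grad G_{0,\tau} h \|_{L^2(M)} + \| \tilde q_\tau \|_{L^\infty(M)} \| G_{0,\tau} h \|_{L^2(M)}.
\end{equation*}
Applying the preceding proposition with $s = 1$ and $s = 0$ respectively, the right-hand side is bounded by
\begin{equation*}
C \bigl(\| A - A_\tau \|_{L^\infty(M)} + |\tau|^{-1} \| \tilde q_\tau \|_{L^\infty(M)}\bigr) \| h \|_{L^2(M)}.
\end{equation*}
By \eqref{AAtaulinfty} the first term is $o(1)$, and by \eqref{potlinfty} the second term is $o(1)$ as well. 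Hence there is a $\tau_0 > 0$ such that $\| B_\tau G_{0,\tau} \|_{L^2(M) \to L^2(M)} \leq 1/2$ whenever $|\tau| \geq \tau_0$ lies outside the discrete exceptional set, and the Neumann series converges, giving $(I + B_\tau G_{0,\tau})^{-1}$ on $L^2(M)$ with operator norm at most $2$.

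With $G_\tau := G_{0,\tau}(I + B_\tau G_{0,\tau})^{-1}$, the right-inverse identity is immediate:
\begin{equation*}
P_\tau G_\tau v = (-\Delta_{g,\tau} + B_\tau) G_{0,\tau}(I + B_\tau G_{0,\tau})^{-1} v = (I + B_\tau G_{0,\tau})(I + B_\tau G_{0,\tau})^{-1} v = v.
\end{equation*}
The Sobolev bound is inherited from that of $G_{0,\tau}$: for $0 \leq s \leq 2$,
\begin{equation*}
\| G_\tau v \|_{H^s(M)} \leq \| G_{0,\tau} \|_{L^2 \to H^s} \cdot \| (I + B_\tau G_{0,\tau})^{-1} v \|_{L^2(M)} \leq C |\tau|^{-1+s} \| v \|_{L^2(M)},
\end{equation*}
as claimed. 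The only substantive point — and the reason the regularization $\phi_\tau$ with its particular $\tau$-dependent bounds was introduced in the first place — is the smallness $\| B_\tau G_{0,\tau} \|_{L^2 \to L^2} = o(1)$; everything else is formal manipulation of a convergent Neumann series. In particular the argument would break down under only $L^\infty$ control of $A_\tau$ without the gain $\| A - A_\tau \|_{L^\infty(M)} = o(1)$, which is exactly what the approximation lemma provides.
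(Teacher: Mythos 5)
Your proposal is correct and follows essentially the same route as the paper: the perturbation $K = B_\tau G_{0,\tau}$ you invert by Neumann series is exactly the operator the paper introduces, and the smallness estimate $\| K \|_{L^2 \to L^2} \lesssim \| A - A_\tau \|_{L^\infty(M)} + |\tau|^{-1} \| \tilde q_\tau \|_{L^\infty(M)} = o(1)$ is identical, relying on the same two bounds from the approximation lemma and \eqref{potlinfty}. The only cosmetic difference is that the paper phrases the construction as solving $(I+K)g = f$ for the ansatz $v = G_{0,\tau} g$ rather than writing $G_\tau = G_{0,\tau}(I + B_\tau G_{0,\tau})^{-1}$ explicitly.
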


\begin{proof}
Given $f \in L^2(M)$, we wish to solve
\begin{align}
\left (- \Delta_{g,\tau} + \la A - A_\tau, \grad \ra_{g} + \tilde q_\tau \right ) v = f. \label{modeq}
\end{align}
We take as our ansatz
$$
v = G_{0,\tau} g.
$$
Then $v$ solves \eqref{modeq} if and only if $g$ solves the integral equation
\begin{align}
(I + K) g = f \label{integralequation}
\end{align}
where
$$
K = \left (\la A - A_\tau, \grad \ra_{g} + \tilde q_\tau \right ) G_{0,\tau}.
$$
Now we estimate
\begin{align*}
\| K \|_{L^2(M) \rightarrow L^2(M)} &\lesssim \| A - A_\tau \|_{L^\infty(M)} \| G_{0, \tau} \|_{L^2(M) \rightarrow H^1(M)} + \| \tilde q_\tau \|_{L^\infty(M)}
\| G_{0,\tau} \|_{L^2(M) \rightarrow L^2(M)} \\
&\lesssim \| A - A_\tau \|_{L^\infty(M)} + \frac{\| \tilde q_\tau \|_{L^\infty(M)}}{|\tau|} \\
&= o(1).
\end{align*}
Hence for $|\tau|$ sufficiently large and outside a discrete set we have that $\| K \|_{L^2(M) \rightarrow L^2(M)} < 1$. Thus, we may solve \eqref{integralequation}
for $g$ via a Neumann series and obtain $v$.  The norm estimates for $v$ follow from the corresponding norm estimates for $G_{0,\tau}$.

\end{proof}

The proof of Proposition \ref{CGO} now follows easily from the previous preparations.

\begin{proof}[Proof of Proposition \ref{CGO}.]
Let $\omega \in \tilde M_0 \setminus M_0$ be a fixed point, $\lambda \in \R$ be fixed, and let $b \in C^{\infty}(S^{n-2})$.  Write $x = (x_1,r,\theta)$
where $(r,\theta)$ are polar normal coordinates with center $\omega$.  If we choose
\begin{align*}
\psi(x_1,r,\theta) &= r, \\
a(x_1,r, \theta) &= |g_0|^{-1/4} e^{i \lambda(x_1 + ir)}b(\theta),
\end{align*}
Then it can be shown (see \cite{dfksu}) that
\begin{align*}
\|e^{i\tau \psi} (-\Delta_{g, \tau} ) (e^{-i \tau \psi} a)\|_{L^2(M)} = O(1).
\end{align*}
Moreover from \eqref{AAtaul2} and \eqref{potl2} we have that
\begin{align*}
\|(\la A-A_\tau, \grad \ra_{g} + \tilde q_\tau)(e^{-i \tau \psi} a) \|_{L^2(M)} &\lesssim |\tau| \| A - A_\tau \|_{L^2(M)} + \| \tilde q_\tau \|_{L^2(M)} \\
&= o(|\tau|^{1/2 - \eta}).
\end{align*}
Let $f = e^{i \tau \psi} (- \Delta_{g,\tau} + \la  A - A_\tau, \grad \ra_{g} + \tilde q_\tau) (e^{-i \tau \psi} a)$.  The previous
estimates imply that
$$
\| f \|_{L^2(M)} = o(|\tau|^{1/2-\eta})
$$
Recall that $u$ solves \eqref{condeq1} if and only if \eqref{remaindereq} holds.  Thus, setting
$$
\tilde r = e^{-i\tau \psi} G_\tau f
$$
yields the desired CGO solution with norm estimates.
\end{proof}

\section{Uniqueness for Partial Data}

We first state a result from \cite{dfks}.

\begin{lem}\label{raytr}
Let $(M_0,g_0)$ be an $(n-1)$--dimensional simple manifold, and let $F \in \mathcal{E}'(M_0^{int})$.  Consider the pairing
\begin{align}
\left \la F, e^{-\lambda r} b(\theta) |g_0|^{-1/2} \right \ra_{M^{int}_0}, \label{raytrident}
\end{align}
where $(r,\theta)$ are polar normal coordinates in $(M_0,g_0)$ centered at some $\omega \in \partial M_0$.
Here $\la \cdot, \cdot \ra_{M_0^{int}}$ denotes the dual pairing on $M_0^{int}$
between $F$ and $C^\infty$ functions.  If $|\lambda|$ is sufficiently small, and if these pairings vanish for all $\omega \in \partial M_0$ and all
$b \in C^\infty(S^{n-2})$, then $F = 0$.
\end{lem}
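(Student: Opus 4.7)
The plan is to unfold the pairing into an attenuated geodesic ray transform and then invoke its injectivity on simple manifolds with small attenuation. First I would rewrite the pairing in polar normal coordinates $(r,\theta)$ centered at $\omega \in \partial M_0$. The volume form of $g_0$ in these coordinates is $|g_0|^{1/2}\, dr\, d\theta$, so the density $|g_0|^{-1/2}$ in the test function exactly absorbs the Jacobian, and, viewing the pairing via $\mathcal{E}'$--$C^\infty$ duality, the hypothesis becomes
\begin{align*}
\la F, e^{-\lambda r} b(\theta) |g_0|^{-1/2} \ra_{M_0^{int}} = \int_{S^{n-2}} b(\theta)\, (I_\lambda F)(\omega,\theta)\, d\theta = 0,
\end{align*}
where $(I_\lambda F)(\omega,\theta) = \int_0^{\tau(\omega,\theta)} F(r,\theta)\, e^{-\lambda r}\, dr$ is the attenuated geodesic ray transform of $F$ along the maximal geodesic from $\omega$ in direction $\theta$, and $\tau(\omega,\theta)$ is the exit time. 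For each fixed $\omega$, this defines a distribution on $S^{n-2}$ whose vanishing against every smooth $b$ forces it to be zero, so $I_\lambda F(\omega, \cdot) \equiv 0$.

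Next I would use simplicity to convert boundary data into full ray-transform data. Since every maximal geodesic of $(M_0,g_0)$ in $M_0^{int}$ has both endpoints on $\partial M_0$, varying $\omega$ over $\partial M_0$ sweeps out every oriented geodesic of $M_0$. Hence the hypothesis is equivalent to $I_\lambda F \equiv 0$ on the whole space of geodesics.

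The remaining and hardest step is injectivity of $I_\lambda$ on $\mathcal{E}'(M_0^{int})$ for sufficiently small $|\lambda|$. When $\lambda = 0$ this is the classical injectivity of the X-ray transform on simple manifolds, proved via ellipticity of the normal operator $I_0^* I_0$ (an elliptic pseudodifferential operator of order $-1$) together with a Pestov-type energy identity. For $|\lambda|$ small, $I_\lambda^* I_\lambda$ has the same principal symbol and differs from $I_0^* I_0$ by a perturbation that is small in a suitable operator norm; after embedding $M_0$ into a slightly larger simple $\tilde M_0$ to handle compactly supported distributions cleanly, a standard perturbation argument then yields injectivity of $I_\lambda$. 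The main obstacle is precisely this transition from the unattenuated to the attenuated case: the smallness of $|\lambda|$ is essential so that the perturbation of the normal operator preserves injectivity, and quantitative stability estimates for $I_0$ are what make the perturbation argument succeed.
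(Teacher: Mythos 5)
Your first step is correct and matches the paper: in polar normal coordinates the factor $|g_0|^{-1/2}$ cancels the volume density $|g_0|^{1/2}\,dr\,d\theta$, and varying $b$ shows that the attenuated ray transform of $F$ along every geodesic issued from every boundary point vanishes (for each fixed $\omega$, as a distribution in $\theta$). For smooth compactly supported $F$ the paper then simply cites the injectivity of $T_\lambda$ for small $|\lambda|$ on simple manifolds (Theorem 7.1 of \cite{dfksu}); your sketch of that result by perturbing the normal operator off $\lambda=0$ using the stability estimate for the unattenuated transform is essentially the standard proof of the cited theorem, so that part is acceptable in outline.

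The genuine gap is in the passage from $C^\infty_0(M_0^{int})$ to $\mathcal{E}'(M_0^{int})$. You assert ``injectivity of $I_\lambda$ on $\mathcal{E}'(M_0^{int})$'' via a perturbation argument resting on quantitative stability for $I_0$, but that machinery lives on $L^2$: the pointwise ray transform of a distribution along a single geodesic is not defined (one cannot restrict an element of $\mathcal{E}'$ to a curve), and embedding $M_0$ into a larger simple manifold does nothing to resolve this. What is actually needed --- and what occupies most of the paper's proof --- is a duality argument: approximate $F$ by $f_n\in C^\infty_0(M_0^{int})$ with uniform seminorm bounds, use the Santal\'o formula and the adjoint $T_\lambda^*$ to rewrite the vanishing pairings as $\langle F, T_\lambda^* h\rangle_{M_0^{int}}=0$ for all $h\in C^\infty_0((\partial_+(SM_0))^{int})$, take $h=T_\lambda\psi$ and use self-adjointness to conclude $T_\lambda^*T_\lambda F=0$, and then invoke the ellipticity of $T_\lambda^*T_\lambda$ (an elliptic pseudodifferential operator of order $-1$) to produce a parametrix $B$ with $BT_\lambda^*T_\lambda=I+R$, $R$ smoothing, whence $F=-RF\in C^\infty(M_0^{int})$ and the smooth case applies. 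You mention the ellipticity of the normal operator only as an ingredient in proving injectivity for functions, not as the regularity-upgrading device for $F$; without that step your argument only establishes that a family of distributions on $S^{n-2}$ vanishes and never produces an equation for $F$ itself to which any injectivity theorem can be applied.
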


Lemma \ref{raytr} is closely connected to the injectivity of the attenuated ray transform on $M_0$ for small attenuations. For
this connection and the proof of Lemma \ref{raytr}, we refer the reader to Appendix A.

Finally, we have the following boundary integral identity from \cite{knu}.

\begin{lem}\label{bdyident}
Suppose $\gamma_j \in C^1(M)$, $j = 1,2$ and suppose $u_1, u_2 \in H^1(M)$ satisfy $\diver(\gamma_j \grad u_j) = 0$ in $M$.  Suppose further that
$\tilde u \in H^1(M)$ satisfies $\diver(\gamma_1 \grad \tilde u_1) = 0$ with $\tilde u_1 = u_2$.  Then
\begin{align*}
\int_M \left \la \gamma_1^{1/2} \grad \left ( \gamma_2^{1/2} \right ) - \gamma_1^{1/2} \grad \left ( \gamma_2^{1/2} \right ),
\grad(u_1 u_2) \right \ra_g \vol = \int_{\partial M} \gamma_1 \nmlder(\tilde u_1 - u_2 ) u_1 \hspace{2 pt} dS_g
\end{align*}
\end{lem}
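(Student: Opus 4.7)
The plan is to derive the identity by combining the Liouville-type conjugation $v_j = \gamma_j^{1/2} u_j$ with the classical Alessandrini integral identity for the Schr\"odinger equation. Setting $X := \gamma_1^{1/2} \grad(\gamma_2^{1/2}) - \gamma_2^{1/2} \grad(\gamma_1^{1/2})$, I first note that $u_j \in H^1(M)$ solves $\diver(\gamma_j \grad u_j) = 0$ if and only if $v_j := \gamma_j^{1/2} u_j$ satisfies $(-\laplace + q_j) v_j = 0$ with $q_j := \gamma_j^{-1/2} \laplace(\gamma_j^{1/2})$ (the same conjugation invoked at the start of Section 3). Similarly $\tilde v_1 := \gamma_1^{1/2} \tilde u_1$ satisfies $(-\laplace + q_1) \tilde v_1 = 0$, and the boundary condition $\tilde u_1 = u_2$ on $\partial M$ (together with $\gamma_1 = \gamma_2$ on $\partial M$ inherited from the setting of Theorem~\ref{thm1}) translates to $\tilde v_1 = v_2$ on $\partial M$.

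The main step is to establish the standard Schr\"odinger Alessandrini identity
\begin{equation*}
\int_M (q_1 - q_2) v_1 v_2 \vol = \int_{\partial M} \nmlder(\tilde v_1 - v_2) \, v_1 \, dS_g,
\end{equation*}
which I would prove by testing $(-\laplace + q_1) \tilde v_1 = 0$ and $(-\laplace + q_2) v_2 = 0$ against $v_1$, applying Green's formula, and subtracting. The leftover interior term $\int_M \la \grad(\tilde v_1 - v_2), \grad v_1 \ra_g \vol$ is eliminated by a further integration by parts exploiting $\tilde v_1 - v_2 \in H^1_0(M)$ and $\laplace v_1 = q_1 v_1$, yielding the cancellation that leaves only the right-hand side above.

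The final step is to translate both sides back to the conductivity variables. A direct algebraic computation gives $(q_1 - q_2) v_1 v_2 = - u_1 u_2 \, \diver(X)$, so the divergence theorem produces
\begin{equation*}
\int_M (q_1-q_2) v_1 v_2 \vol = \int_M \la X, \grad(u_1 u_2) \ra_g \vol - \int_{\partial M} \la X, \nu \ra_g \, u_1 u_2 \, dS_g.
\end{equation*}
On the right-hand side of the Schr\"odinger identity, expanding $\nmlder(\tilde v_1 - v_2) v_1$ by the product rule and using $\tilde u_1 = u_2$ and $\gamma_1 = \gamma_2$ on $\partial M$ produces precisely $\int_{\partial M} \gamma_1 u_1 \nmlder(\tilde u_1 - u_2) \, dS_g - \int_{\partial M} \la X, \nu \ra_g \, u_1 u_2 \, dS_g$. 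The two $\la X, \nu \ra u_1 u_2$ boundary pieces cancel and the stated identity follows.

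The main obstacle is the careful bookkeeping of boundary contributions when moving between the $v_j$ and $u_j$ variables: each $\nmlder v_j$ splits into a $\gamma_j^{1/2} \nmlder u_j$ piece and a $u_j \nmlder(\gamma_j^{1/2})$ piece, and the second type must recombine exactly with the boundary term produced by the divergence theorem applied to $X$. The hypothesis $\gamma_1|_{\partial M} = \gamma_2|_{\partial M}$ is precisely what makes this cancellation succeed and allows the result to be packaged as an identity in the conductivity framework.
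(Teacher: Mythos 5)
Your argument is correct in substance, but note that the paper does not actually prove this lemma: it cites \cite{knu} and asserts that the divergence-theorem argument there carries over to the manifold setting, so your proposal is a self-contained derivation rather than a reproduction of anything in the text. Your route --- the Liouville conjugation $v_j=\gamma_j^{1/2}u_j$, the Schr\"odinger Alessandrini identity $\int_M(q_1-q_2)v_1v_2\vol=\int_{\partial M}\nmlder(\tilde v_1-v_2)\,v_1\,dS_g$, and the translation back via $(q_1-q_2)v_1v_2=-u_1u_2\,\diver X$ --- checks out: one has $\diver X=\gamma_1^{1/2}\laplace(\gamma_2^{1/2})-\gamma_2^{1/2}\laplace(\gamma_1^{1/2})$, so your algebraic identity holds, and on $\partial M$ (using $\tilde u_1=u_2$ and $\gamma_1=\gamma_2$) one computes $\nmlder(\tilde v_1-v_2)\,v_1=\gamma_1\nmlder(\tilde u_1-u_2)\,u_1-\la X,\nu\ra_g\,u_1u_2$, so the two $\la X,\nu\ra_g\,u_1u_2$ boundary terms do cancel exactly as you claim. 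The argument the paper points to stays in the conductivity variables throughout and never forms the potentials $q_j$; what your detour buys is that the key cancellation becomes the completely standard Alessandrini computation, at the cost of two caveats you should make explicit.

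First, your proof uses $\gamma_1|_{\partial M}=\gamma_2|_{\partial M}$, which is not among the stated hypotheses of the lemma (you correctly flag that it is ``inherited'' from the setting of Theorem \ref{thm1}). This is defensible --- the identity as printed, with both normal derivatives weighted by $\gamma_1$, genuinely wants that hypothesis, and the lemma is only ever applied under it --- but it should be added to the statement rather than imported silently. (The statement also contains a typo: the printed integrand $\gamma_1^{1/2}\grad(\gamma_2^{1/2})-\gamma_1^{1/2}\grad(\gamma_2^{1/2})$ is identically zero; your $X=\gamma_1^{1/2}\grad(\gamma_2^{1/2})-\gamma_2^{1/2}\grad(\gamma_1^{1/2})$ is the intended field.) Second, for $\gamma_j\in C^1(M)$ the potentials $q_j=\gamma_j^{-1/2}\laplace(\gamma_j^{1/2})$ and the divergence $\diver X$ exist only as distributions --- this is precisely the obstruction that forces the paper to avoid the Liouville substitution in its CGO construction --- so every step in which $q_j$ or $\diver X$ appears under an integral sign must be read as a distributional pairing against a $W^{1,1}(M)$ function and justified either by approximating the $\gamma_j$ by smooth conductivities or by writing each such pairing as $-\int_M\la\grad(\gamma_j^{1/2}),\grad(\cdot)\ra_g\vol$ plus a boundary term from the outset. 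This is routine, since the final identity involves only first derivatives of the $\gamma_j$ and of $u_1u_2$, but as written your ``direct algebraic computation'' and ``divergence theorem'' steps presuppose second derivatives of $\gamma_j$ that do not exist classically.
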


The above result is stated in \cite{knu} for $M \subseteq \R^n$, but their argument is based on the divergence theorem and easily carries over to the current
setting.

For $\tau > 0$ sufficiently large, let
\begin{align*}
u_1 &= e^{-\phi_{1,\tau}/2} e^{-\tau x_1} \left ( e^{-i\tau r} |g_0|^{-1/4} e^{i \lambda(x_1 + ir)}b(\theta) + \tilde r_1 \right ), \\
u_2 &= e^{-\phi_{2,\tau}/2} e^{\tau x_1} \left ( e^{i\tau r} |g_0|^{-1/4} + \tilde r_2 \right ),
\end{align*}
be our CGO solutions to $\diver(\gamma_j \grad u_j) = 0$ in $M$, $j = 1,2$ which satisfy \eqref{remestimates}. In the notation from Section 3, we chose
\begin{align*}
v_{1,\tau} &= e^{-i\tau r} |g_0|^{-1/4} e^{i \lambda(x_1 + ir)}b(\theta), \\
v_{2,\tau} &= e^{i\tau r} |g_0|^{-1/4}.
\end{align*}
  Further, define $\tilde u_1$ as the
solution to $\diver(\gamma_1 \grad \tilde u_1) = 0$ in $M$ with $\tilde u_1 = u_2$ on $\partial M$.  Then Lemma \ref{bdyident} implies that
\begin{align}
\int_M \left \la \gamma_1^{1/2} \grad \left ( \gamma_2^{1/2} \right ) - \gamma_1^{1/2} \grad \left ( \gamma_2^{1/2} \right ),
\grad(u_1 u_2) \right \ra_g \vol = \int_{\partial M} \gamma_1 \nmlder(\tilde u_1 - u_2 ) u_1 \hspace{2 pt} dS_g \label{bdyform}
\end{align}

\begin{lem}\label{rhsofbdyident}
Suppose the assumptions of Theorem \ref{thm1} hold, and define $\tilde u_1, u_2,$ and $u_1$ as above.  Then
\begin{align}
\int_{\partial M} \gamma_1 \nmlder(\tilde u_1 - u_2 ) u_1 \hspace{2 pt} dS_g = o(1) \notag
\end{align}
as $\tau \rightarrow \infty$.
\end{lem}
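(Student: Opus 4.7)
The plan is to first use the partial data hypothesis to restrict the integration domain to $\partial M_{+,\epsilon}$, and then estimate the remaining integral via a weighted Cauchy--Schwarz inequality combined with the Carleman estimate of Corollary \ref{carlest2} applied to a suitably conjugated version of $\tilde u_1 - u_2$.

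First I would observe that $\gamma_1|_{\partial M} = \gamma_2|_{\partial M}$ combined with the partial data equality $\Lambda_{\gamma_1}(u_2|_{\partial M})|_{\partial M_{-,\epsilon}} = \Lambda_{\gamma_2}(u_2|_{\partial M})|_{\partial M_{-,\epsilon}}$ forces $\nmlder(\tilde u_1 - u_2) = 0$ on $\partial M_{-,\epsilon}$, so that the integrand vanishes there and the integral reduces to one over $\partial M_{+,\epsilon}$.  Setting $w = \tilde u_1 - u_2$ (which vanishes on $\partial M$) and factoring $u_1 = e^{-\tau\varphi} U_1$ with $U_1 = e^{-\phi_{1,\tau}/2}(v_{1,\tau} + \tilde r_1)$, note that $\|U_1\|_{L^2(\partial M)} = O(1)$ by the explicit boundedness of $v_{1,\tau}$ together with \eqref{remestimates}.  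A weighted Cauchy--Schwarz inequality with weight $\tau \nmlder\varphi$ then yields
\begin{align*}
\Bigl|\int_{\partial M_{+,\epsilon}} \gamma_1 u_1 \nmlder w\,dS_g\Bigr| \leq C(\tau \epsilon)^{-1/2} \Bigl(\int_{\partial M_{+,\epsilon}} \tau \nmlder\varphi\, e^{-2\tau\varphi}|\nmlder w|^2\,dS_g\Bigr)^{1/2}.
\end{align*}

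Next I would apply Corollary \ref{carlest2} to $v = e^{-\tau\varphi}e^{\phi_{1,\tau}/2} w$, which vanishes on $\partial M$ since $w$ does; the conditions of the corollary are met by $L_{1,\tau} = -\laplace + \langle A_1 - A_{1,\tau}, \grad\rangle_g + q_{1,\tau}$ thanks to \eqref{AAtaulinfty} and \eqref{potlinfty}.  The conjugation identities preceding Proposition \ref{CGO}, together with $\diver(\gamma_j \grad u_j) = 0$, give
\begin{align*}
L_{1,\tau}(e^{\phi_{1,\tau}/2}w) = e^{\phi_{1,\tau}/2}(-\laplace + \langle A_1, \grad\rangle_g) w = -e^{\phi_{1,\tau}/2}\langle A_1 - A_2, \grad u_2\rangle_g.
\end{align*}
Because $v$ vanishes on $\partial M$ and $\nmlder v = e^{-\tau\varphi}e^{\phi_{1,\tau}/2}\nmlder w = 0$ on $\partial M_{-,\epsilon}$ by the first step, the boundary terms in \eqref{carlest2} collapse to $\int_{\partial M_{+,\epsilon}} 2\tau \nmlder\varphi |\nmlder v|^2\, dS_g$, and the corollary produces
\begin{align*}
\int_{\partial M_{+,\epsilon}}\tau\nmlder\varphi\, e^{-2\tau\varphi}|\nmlder w|^2\,dS_g \leq C\|e^{-\tau\varphi}\langle A_1 - A_2, \grad u_2\rangle_g\|_{L^2(M)}^2.
\end{align*}

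The main obstacle is to show that the right-hand source norm is $o(\tau)$ as $|\tau|\to\infty$, so that combined with the prefactor $(\tau\epsilon)^{-1/2}$ from Cauchy--Schwarz the total estimate is $o(1)$.  I would decompose $A_1 - A_2 = (A_1 - A_{1,\tau}) - (A_2 - A_{2,\tau}) + (A_{1,\tau} - A_{2,\tau})$.  The two rough pieces are controlled by combining $\|A_j - A_{j,\tau}\|_{L^2(M)} = o(\tau^{-1/2-\eta})$ from \eqref{AAtaul2} with the pointwise bound $|e^{-\tau\varphi}\grad u_2|_g = O(\tau)$, yielding a contribution of size $o(\tau^{1/2-\eta})$.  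For the smooth piece $(A_{1,\tau} - A_{2,\tau})$, one exploits the oscillatory phase $e^{i\tau r}$ carried by $v_{2,\tau} = e^{i\tau r}|g_0|^{-1/4}$ via an integration by parts in the polar radial coordinate, together with the approximation estimates \eqref{pptaulinfty}--\eqref{gradpptaulinfty}, to beat the naive $O(\tau)$ growth.  Plugging the resulting $o(\tau^{1/2})$ source bound into the previous inequalities yields the desired conclusion.
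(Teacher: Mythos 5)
Your first two steps (localizing to $\partial M_{+,\epsilon}$ via the partial data hypothesis, then Cauchy--Schwarz) match the paper, and your observation that $v = e^{-\tau\varphi}e^{\phi_{1,\tau}/2}(\tilde u_1 - u_2)$ vanishes on $\partial M$, so that the boundary terms in \eqref{carlest2} collapse to $2\tau\int_{\partial M_{+,\epsilon}}(\nmlder\varphi)|\nmlder v|^2\,dS_g$, is correct and in fact tidier than what the paper has to do. The gap is in the source term. With your choice of $v$, the right-hand side of the Carleman estimate is the squared $L^2$ norm of $e^{-\tau\varphi}e^{\phi_{1,\tau}/2}\la A_1 - A_2, \grad u_2\ra_g$, and the coefficient $A_1 - A_2$ is \emph{not} small in any norm: the hypotheses only give agreement of $\gamma_1,\gamma_2$ and their normal derivatives on $\partial M$, and proving $A_1 = A_2$ in the interior is the whole point of the theorem. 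Since $|e^{-\tau\varphi}\grad u_2|_g = O(\tau)$ pointwise, the source is genuinely $O(\tau)$ in $L^2$, so after squaring, dividing by $2\tau\epsilon$, and taking a square root you only get $O(\tau^{1/2})$ for the boundary integral, not $o(1)$. Your proposed rescue for the smooth piece $A_{1,\tau}-A_{2,\tau}$ -- integrating by parts against the phase $e^{i\tau r}$ -- cannot work here, because the quantity being estimated is an $L^2$ norm, which is insensitive to oscillation: $\|\tau\, h\, e^{i\tau r}\|_{L^2} = \tau\|h\|_{L^2}$ no matter how fast the phase oscillates. There is no bilinear pairing left at this stage in which to integrate by parts.

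The paper avoids this by conjugating each solution with \emph{its own} smooth approximation: it sets $u = e^{\phi_{1,\tau}/2}\tilde u_1 - e^{\phi_{2,\tau}/2}u_2$ rather than $e^{\phi_{1,\tau}/2}(\tilde u_1 - u_2)$. Then $L_{1,\tau}(e^{\phi_{1,\tau}/2}\tilde u_1)=0$ exactly and $L_{1,\tau}(e^{\phi_{2,\tau}/2}u_2) = (L_{1,\tau}-L_{2,\tau})(e^{\phi_{2,\tau}/2}u_2)$, whose first-order coefficient is $(A_1 - A_{1,\tau}) - (A_2 - A_{2,\tau})$; this \emph{is} small, namely $o(\tau^{-1/2-\eta})$ in $L^2$ by \eqref{AAtaul2}, which is exactly what makes the source contribution $o(\tau)$ after squaring. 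The price is that $u$ no longer vanishes on $\partial M$: it differs from $e^{\phi_{1,\tau}/2}(\tilde u_1 - u_2)$ by $\delta u = (e^{\phi_{1,\tau}/2}-e^{\phi_{2,\tau}/2})u_2$, whose boundary trace and tangential gradient must be controlled (the paper's Claim 1) using \eqref{pptaulinfty}, \eqref{gradpptaulinfty} together with $\gamma_1|_{\partial M}=\gamma_2|_{\partial M}$ and $\nmlder\gamma_1|_{\partial M}=\nmlder\gamma_2|_{\partial M}$. Note that your argument never invokes the hypothesis on the normal derivatives of the conductivities at the boundary; that is a warning sign, since this hypothesis is precisely what makes the $\delta u$ error terms harmless in the correct argument.
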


\begin{proof}
The proof is similar to the proof from \cite{knu}.  The fact that $\gamma_1|_{\partial M} = \gamma_2|_{\partial M}$ and $\Lambda_{\gamma_1}|_{\partial M_{-,\epsilon}} = \Lambda_{\gamma_2}|_{\partial M_{-,\epsilon}}$
implies that $\nmlder(\tilde u_1 - u_2 ) = 0$ on $\partial M_{-,\epsilon}$.   By Cauchy-Schwarz
\begin{align}
\left | \int_{\partial M} \gamma_1 \nmlder(\tilde u_1 - u_2 ) u_1 \hspace{2 pt} dS_g \right |^2 &\lesssim \left ( \int_{\partial M_{+,\epsilon}} e^{-2\tau x_1}
| \nmlder(\tilde u_1 - u_2) |^2 \hspace{2 pt} dS_g \right) \left ( 1 + \| \tilde r_1 \|_{L^2(\partial M)}^2 \right ) \label{est1}
\end{align}
By the trace theorem and \eqref{remestimates}, we have for $0 < \delta < \eta$
$$
\| \tilde r_1 \|_{L^2(\partial M)} \lesssim \| \tilde r_1 \|_{H^{1/2+\delta}} \lesssim \tau^{\delta - \eta} = o(1).
$$
To estimate \eqref{est1} further we introduce the function
\begin{align*}
u = \left ( e^{\phi_{1,\tau}/2} \tilde u_1 - e^{\phi_{2,\tau}/2} u_2 \right ) = u_0 + \delta u,
\end{align*}
where
\begin{align*}
 u_0 &= e^{\phi_{1,\tau}/2}(\tilde u_1 - u_2 ), \\
\delta u &= \left ( e^{\phi_{1,\tau}/2} - e^{\phi_{2,\tau}/2} \right ) u_2.
\end{align*}
Since $\phi_{1,\tau}$ for large $\tau$ is uniformly bounded from below by a positive constant and $\tilde u_1 = u_2$ on $\partial M$, it follows that
\begin{align}
\int_{\partial M_{+,\epsilon}}e^{-2\tau x_1} | \nmlder (\tilde u_1 - u_2) |^2 \hspace{2 pt} dS_g \lesssim  \int_{\partial M_{+,\epsilon}} e^{-2 \tau x_1}
|\nmlder (\delta u) |^2 \hspace{2pt} dS_{g} + \int_{\partial M_{+,\epsilon}} e^{-2 \tau x_1}
|\nmlder (u) |^2 \hspace{2pt} dS_{g}. \label{est2}
\end{align}

\noindent \textbf{Claim 1:} We have as $\tau \rightarrow \infty$
\begin{align}
\int_{\partial M} e^{-2\tau x_1} |\delta u|^2 \hspace{2pt} dS_g &= O(\tau^{-2-2\eta}), \label{deltauest1} \\
\int_{\partial M} e^{-2\tau x_1} \left |\grad (\delta u) \right |_g^2 \hspace{2pt} dS_g &= O(\tau^{-2\eta}) \label{deltauest2}.
\end{align}
To prove the claim, observe $\gamma_1|_{\partial M} = \gamma_2|_{\partial M}$ and $\nmlder \gamma_1|_{\partial M} = \nmlder \gamma_2|_{\partial M}$, \eqref{pptaulinfty} and
\eqref{gradpptaulinfty} imply that
\begin{align*}
\|e^{\phi_{1,\tau}/2} - e^{\phi_{2,\tau}/2}\|_{L^\infty(\partial M)} &= O(\tau^{-1-\eta}), \\
\|\grad(e^{\phi_{1,\tau}/2} - e^{\phi_{2,\tau}/2}) \|_{L^\infty(\partial M)} &= O(\tau^{-\eta}).
\end{align*}
By the trace theorem, for $0 < \delta < \eta$ we have that
\begin{align*}
\|e^{-\tau x_1} u_2 \|_{L^2(\partial M)} &\lesssim  1 + \| \tilde r_2 \|_{H^{1/2+\delta}(M)} = O(1), \\
\|e^{-\tau x_1} \grad (u_2) \|_{L^2(\partial M)} &\lesssim \tau + \|\tilde r_2\|_{H^{3/2+\delta}(M)} = O(\tau).
\end{align*}
Hence
\begin{align*}
\int_{\partial M} e^{-2 \tau x_1} | \grad (\delta u) |_g^2 \hspace{2pt} dS_g &\lesssim
\|e^{\phi_{1,\tau}/2} - e^{\phi_{2,\tau}/2}\|^2_{L^\infty(\partial M)} \|e^{-\tau x_1} \grad (u_2) \|^2_{L^2(\partial M)} \\
&+ \|\grad(e^{\phi_{1,\tau}/2} - e^{\phi_{2,\tau}/2}) \|^2_{L^\infty(\partial M)} \|e^{-\tau x_1} u_2 \|^2_{L^2(\partial M)} \\
&= O(\tau^{-2\eta}).
\end{align*}
The bound for $\int_{\partial M} e^{-2\tau x_1} |\delta u|^2 \hspace{2pt} dS_g$ is proved similarly.  This proves \textbf{Claim 1}.

\noindent \textbf{Claim 2:} We have as $\tau \rightarrow \infty$
\begin{align}
\int_{\partial M_{+,\epsilon}} e^{-2 \tau x_1}
|\nmlder (u) |^2 \hspace{2pt} dS_{g} = o(1). \label{nmlderu}
\end{align}

We now apply our Carleman estimate \eqref{carlest2} with
\begin{align*}
A &= A_1 - A_{1,\tau}, \\
q &= q_{1,\tau}, \\
v &= e^{-\tau x_1} u.
\end{align*}
Note that \eqref{AAtaulinfty} and \eqref{potlinfty} imply the hypotheses for \eqref{carlest2} are satisfied.  Hence, we have that
\begin{align}
&\int_{\partial M_{+,\epsilon}} 4 \Re (\nmlder v \partial_1 \bar v) - 2  (\nmlder \varphi) | \grad v |_{g}^2 \hspace{2 pt} dS_{g} \lesssim
\tau \int_{\partial M} |v|^2 \hspace{2 pt} dS_{g} + \frac{1}{\tau} \left | \int_{\partial M} \bar v \nmlder v \hspace{2 pt} dS_{g} \right | \notag \\
&+ \tau^2 \int_{\partial M} \left | \nmlder \varphi \right | |v|^2 \hspace{2pt} dS_g +
\left | \int_{\partial M_{-,\epsilon}} 4 \Re (\nmlder v \partial_1 \bar v) - 2  (\nmlder \varphi) | \grad v |_{g}^2 \hspace{2 pt} dS_{g}\right | \notag \\
&+ \frac{1}{\tau} \int_M \left |e^{-\tau \varphi } \left (-\laplace + \la A_1-A_{1,\tau}, \grad \ra_{g} + q_{1,\tau} \right ) (e^{\tau \varphi} v) \right |^2 \tvol \label{carlest3}
\end{align}
We first estimate the left hand side of \eqref{carlest3}.  Young's inequality and $\tilde u_1 |_{\partial M} = u_2 |_{\partial M}$ imply
\begin{align}
\int_{\partial M_{+,\epsilon}} 4 \Re (\nmlder v \partial_1 \bar v) - 2  (\nmlder \varphi) | \grad v |_{g}^2 \hspace{2 pt} dS_{g} &\geq
\int_{\partial M_{+,\epsilon}} e^{-2\tau x_1} \left ( \nmlder \varphi \right ) \left | \nmlder u \right |^2 \hspace{2 pt} dS_{g} \notag \\
&- C \int_{\partial M_{+,\epsilon}} e^{-2\tau x_1} \left | \grad \delta u \right |_g^2 \hspace{2pt} dS_g \notag \\
&- C \tau^2 \int_{\partial M_{+,\epsilon}} e^{-2\tau x_1} |\delta u|^2 \hspace{2pt} dS_g \notag \\
&\geq  \epsilon \int_{\partial M_{+,\epsilon}} e^{-2\tau x_1} \left | \nmlder u \right |^2 \hspace{2 pt} dS_{g} + o(1) \label{carlest3lhs}
\end{align}
as $\tau \rightarrow \infty$.
We now estimate the right hand side of \eqref{carlest3}.  Since $\tilde u_1 |_{\partial M} = u_2 |_{\partial M}$, we have that
\begin{align}
\tau \int_{\partial M} |v|^2 \hspace{2pt} dS_g = \tau \int_{\partial M} e^{-2\tau x_1} |\delta u|\hspace{2pt} dS_g = O(\tau^{-1-2\eta}).\label{carlest3rhs1}
\end{align}
Similarly, we have that
\begin{align}
\tau^2 \int_{\partial M} \left | \nmlder \varphi \right | |v|^2 \hspace{2pt} dS_g = O(\tau^{-2 \eta}) \label{carlest3rhs2}
\end{align}
as $\tau \rightarrow \infty$.
Since $(\nmlder \tilde u_1 ) |_{\partial M_{-,\epsilon}} = (\nmlder u_2) |_{\partial M_{-,\epsilon}}$ as well, we have
\begin{align}
\left | \int_{\partial M_{-,\epsilon}} 4 \Re (\nmlder v \partial_1 \bar v) - 2  (\nmlder \varphi) | \grad v |_{g}^2 \hspace{2 pt} dS_{g} \right |
&\leq \tau^2 \int_{\partial M} e^{-2 \tau x_1} |\delta u|^2 \hspace{2pt} dS_g \notag \\
&+ \int_{\partial M} e^{-2 \tau x_1} |\grad(\delta u)|_g^2 \hspace{2pt} dS_g \notag \\
&=O(\tau^{-2 \eta}) \label{carlest3rhs3}
\end{align}
as $\tau \rightarrow \infty$.  Now by Young's inequality
\begin{align}
\frac{1}{\tau} \left | \int_{\partial M} \overline v \nmlder v \hspace{2pt} dS_g \right | &\leq
\left | \int_{\partial M} e^{-2 \tau x_1} |\delta u|^2 \nmlder \varphi \hspace{2pt} dS_g \right |
+ \frac{1}{\tau} \left | \int_{\partial M} e^{-2 \tau x_1} \overline{\delta u} \nmlder u \hspace{2pt} dS_g \right | \notag \\
&\leq O(\tau^{-2-2\eta}) + \frac{1}{\tau} \left ( \int_{\partial M} e^{-2 \tau x_1} |\delta u|^2 \hspace{2pt} dS_g \right )^{1/2}
\left ( \int_{\partial M} e^{-2 \tau x_1} |\grad (\delta u) |_g^2 \hspace{2pt} dS_g \right )^{1/2} \notag \\
&+ \frac{1}{2 \epsilon \tau^2} \int_{\partial M} e^{-2 \tau x_1} |\delta u|^2 \hspace{2pt} dS_g +
\frac{\epsilon}{2} \int_{\partial M_{+,\epsilon}} e^{-2 \tau x_1} |\nmlder u|^2 \hspace{2pt} dS_g \notag \\
&= o(1) + \frac{\epsilon}{2} \int_{\partial M_{+,\epsilon}} e^{-2 \tau x_1} |\nmlder u|^2 \hspace{2pt} dS_g \label{carlest3rhs4}
\end{align}
as $\tau \rightarrow \infty$.

We now estimate the final term appearing in the right hand side of \eqref{carlest3}.  Note that
\begin{align}
(-\laplace + \la A_1-A_{1,\tau},\grad \ra_g + q_1,\tau )\left (e^{\phi_{1,\tau}/2} \tilde u_1 \right ) &= 0, \notag \\
(\left (-\laplace + \la A_1 - A_{1,\tau},\grad \ra_g + q_1,\tau \right)\left (e^{\phi_{2,\tau}/2} u_2 \right ) &=
\left ( \la (A_1-A_{1,\tau}) - (A_2-A_{2,\tau}) \right ), \grad \ra_g \notag \\
&+ q_{1,\tau} - q_{2,\tau} ) \left ( e^{\tau x_1}(v_{2,\tau} + \tilde r_2) \right ) \label{carlest3rhs4a}
\end{align}
As $\tau \rightarrow \infty$
\begin{align}
\frac{1}{\tau} \int_M \left | \left  (q_{1,\tau} - q_{2,\tau} \right ) \left ( e^{\tau x_1}(v_{2,\tau} + \tilde r_2) \right ) \right |^2 \hspace{2pt} dV_g \lesssim
\frac{1}{\tau} \cdot o(\tau) \cdot \left ( 1 + o(\tau^{-1-2\eta}) \right ) = o(1), \label{carlest3rhs4b}
\end{align}
and for $j = 1,2$
\begin{align}
\frac{1}{\tau} \int_{M} \left | \la A_j - A_{j,\tau}, \grad \left ( e^{\tau x_1}(v_{2,\tau} + \tilde r_2) \right ) \ra_g \right |^2 &\vol \lesssim \tau \left \| A_j - A_{j,\tau} \right \|_{L^2(M)}^2  +  \tau \left \| A_j - A_{j,\tau} \right \|_{L^\infty(M)}^2 \left \| \tilde r_2 \right \|_{L^2(M)}^2 \notag \\
&+ \tau \left \| A_j - A_{j,\tau} \right \|_{L^2(M)}^2 + \frac{1}{\tau} \left \| A_j - A_{j,\tau} \right \|_{L^\infty(M)}^2 \left \| \tilde r_2 \right \|_{H^1(M)}^2 \notag \\
&= o(\tau^{-2\eta}) \label{carlest3rhs4c}
\end{align}
Then \eqref{carlest3rhs4a}--\eqref{carlest3rhs4c} yield
\begin{align}
\frac{1}{\tau} \int_M &\left |e^{-\tau \varphi } \left (-\laplace + \la A_1-A_{1,\tau}, \grad \ra_{g} + q_{1,\tau} \right )
(e^{\tau \varphi} v) \right |^2 \tvol = o(1) \label{carlest3rhs5}
\end{align}
as $\tau \rightarrow \infty$.
Combining \eqref{carlest3}--\eqref{carlest3rhs4}, and \eqref{carlest3rhs5} give \textbf{Claim 2}.

Finally, \eqref{est1},\eqref{est2}, \textbf{Claim 1}, and \textbf{Claim 2} yield Lemma \ref{rhsofbdyident}.
\end{proof}
We now prove Theorem \ref{thm1}.

\begin{proof}[Proof of Theorem \ref{thm1}.]
We now go back to \eqref{bdyident}.  It is a well-known fact that $u_1,u_2 \in H^{1/2}(M)$ implies $u_1u_2 \in W^{1/2,2n/(2n-1)}(M)$ with
\begin{align}
\| u_1 u_2 \|_{ W^{1/2,2n/(2n-1)}(M)} \leq C \| u_1 \|_{H^{1/2}(M)} \| u_2 \|_{H^{1/2}(M)}. \notag
\end{align}
It then follows from \eqref{pptaulinfty} and \eqref{remestimates} that as $\tau \rightarrow \infty$
\begin{align*}
u_1 u_2 &=  e^{-\phi_{1,\tau}/2} e^{-\phi_{2,\tau}/2} \left ( v_{1,\tau} v_{2,\tau} + v_{1,\tau} \tilde r_2
+ v_{2,\tau} \tilde r_1 + \tilde r_1 \tilde r_2 \right ) \\
&=  \gamma_1^{1/2} \gamma_2^{1/2} |g|^{-1/2} e^{i \lambda(x_1 + ir)}b(\theta) + o(\tau^{-\eta})
\end{align*}
in $W^{1/2,2n/(2n-1)}(M)$ and therefore
\begin{align*}
\grad(u_1 u_2) = \grad \left ( \gamma_1^{1/2} \gamma_2^{1/2} |g|^{-1/2} e^{i \lambda(x_1 + ir)}b(\theta) \right ) + o(\tau^{-\eta})
\end{align*}
in $W^{-1/2,2n/(2n-1)}(M)$.  Since $\gamma_1^{1/2} \grad \left ( \gamma_2^{1/2} \right ) - \gamma_1^{1/2} \grad \left ( \gamma_2^{1/2} \right)
\in  W_0^{1/2,2n/(2n-1)}(M)$, it follows from Lemmas \ref{bdyident} and \ref{rhsofbdyident} by taking $\tau \rightarrow \infty$
that
\begin{align*}
0 &= \int_M \left \la \gamma_1^{1/2} \grad \left ( \gamma_2^{1/2} \right ) - \gamma_1^{1/2} \grad \left ( \gamma_2^{1/2} \right ),
\grad(c \gamma_1^{1/2} \gamma_2^{1/2} |g_0|^{-1/2} e^{i \lambda(x_1 + ir)}b(\theta)) \right \ra_g \vol \\
&= \left \la F , e^{i \lambda(x_1 + ir)}b(\theta) |g_0|^{-1/2} \right \ra_{\R \times M_0^{int}}
\end{align*}
for all $\lambda \in \R$  and $b \in C^\infty(S^{n-2})$ where
\begin{align*}
F =  \diver \left (\gamma_1^{1/2} \grad \left ( \gamma_2^{1/2} \right ) - \gamma_1^{1/2} \grad \left ( \gamma_2^{1/2} \right )
\right ) \in \mathcal E'(\R \times M_0^{int}),
\end{align*}
and $\la \cdot, \cdot \ra_{\R \times M_0^{int}}$ denotes the dual pairing on $\R \times M_0^{int}$.  Written slightly differently, we have that
\begin{align*}
\left \la F_\lambda, e^{- \lambda r}b(\theta) |g_0|^{-1/2}   \right \ra_{M_0^{int}} = 0
\end{align*}
for all $\lambda \in \R$  and $b \in C^\infty(S^{n-2})$ where $F_\lambda \in \mathcal E'(M_0^{int})$ is the Fourier transform of $F$
in the $x_1$--direction, i.e.
\begin{align*}
\left \la F_\lambda, v \right \ra_{M_0^{int}} := \left \la F, e^{i\lambda x_1} v \right \ra_{\R \times M_0^{int}}, \quad v \in C^\infty(M_0^{int}).
\end{align*}
By Lemma \ref{raytr}, we have that $F_\lambda = 0$ for $\lambda$ sufficiently
small.  By the Paley-Weiner theorem, this implies that $F = 0$.  Therefore
\begin{align*}
\frac{1}{2} \laplace \left ( \log \gamma_1 - \log \gamma_2 ) \right ) + \frac{1}{4} \left \la \grad \left ( \log \gamma_1 + \log \gamma_2
\right ), \grad \left ( \log \gamma_1 - \log \gamma_2 \right ) \right \ra_g &= 0 \quad \mbox{in } M, \\
\log \gamma_1 - \log \gamma_2 &= 0 \quad \mbox{on } \partial M.
\end{align*}
Uniqueness for the boundary value problem implies that $\log \gamma_1 = \log \gamma_2$.
\end{proof}

\begin{proof}[Proof of Theorem \ref{thm2}.] Without loss of generality, we may take $x_0 = 0$, and we may assume that $\Omega \subseteq
\{ x \in \R^n : x_n > 0 \}$.
Then $\varphi(x) = \log |x|$.  In the notation of Theorem \ref{thm1}, we set $M = \Omega$ and $g$ equal to the induced Euclidean metric on $\Omega$. We make a change of coordinates
\begin{align}
y_1 &= \log |x|, \notag \\
y' &= \frac{x}{|x|}. \notag
\end{align}
The coordinates $y'$ parametrize the manifold $S^{n-1}$, and the Euclidean metric $g$ on $\Omega$ is given by
$$
g = c \left ( e \oplus g_0 \right )
$$
where $c(y_1,y') = \exp \left (2 y_1 \right )$, $e$ is the Euclidean metric on $\R$, and $g_0 = g_{S^{n-1}}$ is the standard round metric on $S^{n-1}$. Therefore
\begin{align}
(\Omega, g) = (M,g) \subseteq (\R \times M_0^{int}, g) \notag
\end{align}
where $(M_0,g_0) \subseteq (\{ \theta \in S^{n-1} : \theta_n > 0 \},g_0)$ is a closed \lq cap' which is a simple manifold. Moreover, $\varphi(y_1,y') = y_1$ whence in the notation
of Theorem \ref{thm1}
\begin{align}
\partial \Omega_{\pm,\epsilon} = \partial M_{\pm,\epsilon}. \notag
\end{align}
Theorem \ref{thm2} follows immediately from Theorem \ref{thm1}.
\end{proof}

\appendix

\section{}

In this appendix we prove Lemma \ref{raytrident}.  The proof is very similar to the proof of Lemma 5.1 from \cite{dfks}.  We first
introduce some notation and recall some facts.

Consider the unit sphere bundle
$$
SM_0 = \bigcup_{x \in M_0} S_x, \quad S_x = \{(x,\xi) \in T_xM_0 : |\xi| = 1 \}.
$$
This manifold has boundary $\partial (SM_0) = \{ (x,\xi) \in SM_0 : x \in \partial M_0 \}$ which is the union of the two sets
\begin{align*}
\partial_+ (SM_0) &= \{ (x, \xi) \in SM_0 : \la \xi, \nu \ra \leq 0 \} \quad \mbox{(inward pointing vectors)}, \\
\partial_- (SM_0) &= \{ (x, \xi) \in SM_0 : \la \xi, \nu \ra \geq 0 \} \quad \mbox{(outward pointing vectors)},
\end{align*}
where $\nu$ is the outer unit normal vector.

Denote by $t \rightarrow \gamma(t,x,\xi)$ the unit speed geodesic starting at $x$ in the direction $\xi$, and let
$\tau(x,\xi)$ denote the time when the geodesic exits $M_0$.  The exit time $\tau(x,\xi)$ is finite for all
$(x,\xi) \in SM_0$ since
$M_0$ is simple.  We also denote the geodesic flow by
$\varphi_t(x,\xi) = (\gamma(t,x,\xi), \dot \gamma(t,x,\xi))$.

The geodesic ray transform with constant attenuation $-\lambda$, acts on $C^\infty$ functions on $M_0$
by
$$
T_\lambda f(x, \xi) = \int_0^{\tau(x,\xi)} f(\gamma(t,x,\xi)) e^{-\lambda t} dt, \quad (x,\xi) \in \partial_+(SM_0).
$$

Suppose the hypotheses of Lemma \ref{raytr} are satisfied, and $F = f \in C^\infty_0(M_0^{int})$.  Suppose $(r,\theta)$ are polar normal coordinates in $(M_0,g_0)$ centered at
some $\omega \in \partial M_0$.  Since $M_0$ is simple, $(r,\theta)$ are global coordinates on $M_0$. Then the hypotheses of
Lemma \eqref{raytrident} read
\begin{align*}
 \int_{S^{n-2}} \int_0^{\tau(\omega,\theta)}f(r,\theta) e^{-\lambda r} b(\theta) \hspace{2pt} dr \hspace{2pt} d\theta = 0
\end{align*}
for all $b \in C^{\infty}(S^{n-2})$. By varying
$b$, we obtain
\begin{align*}
T_\lambda f(\omega, \theta_0) = \int_0^{\tau(\omega,\theta_0)}f(r,\theta_0) e^{-\lambda r} \hspace{2pt} dr = 0
\end{align*}
for any $\omega \in \partial M_0$ and any $\theta_0 \in S^{n-2}$. Hence the attenuated ray transform $T_\lambda f$ of $f$ is identically zero for small $\lambda$.  By the
following injectivity result (see \cite{dfksu}, Theorem 7.1), it follows that $f = 0$.

\begin{ppn}
Let $(M_0,g_0)$ be a simple manifold.  There exists $\epsilon > 0$ such that if $\lambda$ is a real number with
$|\lambda| < \epsilon$, and if $f \in C^\infty(M_0)$, then the condition $T_\lambda f(x,\xi) = 0$ for all $(x,\xi) \in
\partial_+(SM_0)$ implies that $f = 0$.
\end{ppn}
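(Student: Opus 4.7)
The plan is to reformulate the hypothesis $T_\lambda f = 0$ as a boundary-value problem for the transport equation on the unit sphere bundle $SM_0$. Define
\begin{equation*}
u^f(x,\xi) = \int_0^{\tau(x,\xi)} f(\gamma(t,x,\xi))\, e^{-\lambda t}\, dt \quad \text{on } SM_0.
\end{equation*}
A short computation differentiating along the geodesic flow shows $u^f$ satisfies the transport equation $Xu^f - \lambda u^f = -f$, where $X$ is the geodesic vector field on $SM_0$. By definition $u^f$ vanishes on $\partial_-(SM_0)$ (where $\tau=0$), while $u^f|_{\partial_+(SM_0)} = T_\lambda f = 0$ takes care of the rest of the boundary. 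Thus it suffices to prove: whenever $u \in C^\infty(SM_0)$ satisfies $(X - \lambda)u = -f$ with $f = f(x)$ independent of $\xi$ and $u|_{\partial(SM_0)} = 0$, then $f \equiv 0$.

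The workhorse is the Pestov identity. In the unattenuated case $\lambda = 0$ this gives Mukhometov's classical theorem: introducing the vertical vector field $V$ on the fibers of $SM_0$, one establishes an identity schematically of the form
\begin{equation*}
\|VXu\|_{L^2(SM_0)}^2 = \|XVu\|_{L^2(SM_0)}^2 + (\text{curvature term}) + (\text{boundary term}),
\end{equation*}
which, thanks to simplicity of $(M_0,g_0)$ (hence absence of conjugate points), assembles into a nonnegative quadratic form in $Vu$ that vanishes only when $u$ is fiber-constant; combined with $u|_{\partial(SM_0)} = 0$ this forces $u \equiv 0$ and hence $f \equiv 0$.

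For the attenuated case with small $|\lambda|$ I would run the same identity after substituting $Xu = \lambda u - f$. This preserves the principal positive-definite quadratic form in $Vu$ but produces extra terms of the order $\lambda\,\langle Vu, V(\cdot)\rangle$ and $\lambda^2\|Vu\|^2$. These can be controlled by the main positive term together with a Poincar\'e inequality on fibers (which holds because $u$ vanishes on $\partial(SM_0)$). Choosing $|\lambda|$ below a threshold $\epsilon$ depending only on the Poincar\'e constant and the curvature bounds of $(M_0, g_0)$ lets the lower-order cross-terms be absorbed, yielding $u \equiv 0$.

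The main obstacle is keeping track of the constants in the Pestov identity so that the threshold $\epsilon$ depends only on the geometry of $(M_0,g_0)$; a secondary subtlety is that $\tau(x,\xi)$ is smooth in the interior but only continuous up to $\partial(SM_0)$, so the identity must be justified by approximating $u^f$ by smooth functions or working in a weighted space that allows tangential boundary singularities. An alternative that avoids the sharp Pestov bookkeeping is a Fredholm-perturbation argument: the normal operator $T_\lambda^\ast T_\lambda$ is pseudodifferential, elliptic, and depends analytically on $\lambda$, and is injective at $\lambda = 0$ by Mukhometov's theorem; analytic Fredholm theory then promotes this to injectivity on a small real neighborhood of $\lambda = 0$, which is exactly the conclusion claimed.
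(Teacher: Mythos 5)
This proposition is not proved in the paper: it is imported verbatim from \cite{dfksu} (Theorem 7.1), so there is no in-paper argument to compare against, and your proposal has to be judged on its own merits. Both routes you sketch are viable, and your ``alternative'' is in fact essentially the argument of the cited source, modulo one correction: the normal operator $T_\lambda^*T_\lambda$ is a pseudodifferential operator of order $-1$, hence compact on $L^2(M_0)$, so it is not a Fredholm family to which analytic Fredholm theory applies directly (and that theory would in any case only give invertibility off a discrete set, not on a neighborhood of $\lambda=0$). What one actually does is use ellipticity plus injectivity at $\lambda=0$ to get a quantitative stability estimate of the form $\|f\|_{L^2(M_0)} \leq C\|T_0^*T_0 f\|_{H^1}$ (on a slightly larger simple manifold), prove $\|T_\lambda^*T_\lambda - T_0^*T_0\|_{L^2 \rightarrow H^1} = O(|\lambda|)$, and absorb the error for small $|\lambda|$; injectivity of $T_\lambda^*T_\lambda$ then gives injectivity of $T_\lambda$ since $(T_\lambda^*T_\lambda f, f) = \|T_\lambda f\|^2_{L^2_\mu}$.

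Your primary (Pestov) route also works and is, if anything, cleaner than you make it sound: writing $Xu^f = \lambda u^f - f$ with $f$ independent of $\xi$, the vertical derivative of the source vanishes, so $V(Xu^f) = \lambda V u^f$ exactly and the Pestov identity reads $\lambda^2\|Vu\|^2 = \|XVu\|^2 - (RVu,Vu) + (n-1)\|Xu\|^2$; there is no cross term to absorb, and the threshold $\epsilon$ comes from the lower bound $\|XVu\|^2 - (RVu,Vu) \geq c\|Vu\|^2$, which on a simple manifold follows from the absence of conjugate points (positivity of the index form) rather than from a fiberwise Poincar\'e inequality. One then gets $Vu = 0$ and $Xu = 0$, hence $u$ constant, hence $u = 0$ and $f = 0$. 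The genuine subtlety you flag --- that $u^f$ is only continuous at glancing directions because $\tau(x,\xi)$ fails to be smooth there --- is real and must be handled, typically by extending $f$ by zero to a slightly larger simple manifold where all geodesics of the original manifold exit transversally.
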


We can reduce the case when $F$ is a distribution that is compactly supported in $M_0^{int}$
to the previous case $F \in C^\infty_0(M_0^{int})$ by using duality and the ellipticity
of the operator $T_\lambda^* T_\lambda$.  We will need a few facts about $T_\lambda$ and $T^*_\lambda$.

We write
$$
h_{\varphi}(x,\xi) = h(\varphi_{-\tau(x, -\xi)} (x, \xi), \quad (x,\xi) \in SM_0
$$
for $h \in C^\infty(\partial_+(SM_0))$, and
$$
(h, \tilde h)_{L^2_\mu(\partial_+(SM_0))} = \int_{\partial_+(SM_0)} h \tilde h \mu \hspace{2pt} d(\partial(SM_0)),
$$
where $\mu(x,\xi) = -\la \xi, \nu(x) \ra$ and $dN$ is the Riemannian volume form on a manifold $N$.  From Section 5
of \cite{dfks}, we have the following facts.

\begin{lem}[\emph{Santal\'o Formula}]\label{santform}
If $F : SM_0 \rightarrow \R$ is continuous then
\begin{align*}
\int_{SM_0} F \hspace{2pt} &d(SM_0) \\
&= \int_{\partial_+(SM_0)} \int_0^{\tau(x,\xi)} F(\gamma(t,x,\xi)) \mu(x,\xi) \hspace{2pt} dt \hspace{2pt}
d(\partial(SM_0))(x,\xi).
\end{align*}
\end{lem}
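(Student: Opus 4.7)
The plan is to express the Liouville measure on $SM_0$ in flow-box coordinates adapted to the incoming boundary, so that $\mu$ appears as the Jacobian of the change of variables. Define
\begin{equation*}
\Phi(x,\xi,t) = \varphi_t(x,\xi), \qquad (x,\xi) \in \partial_+(SM_0),\ 0 \leq t \leq \tau(x,\xi).
\end{equation*}
Because $(M_0,g_0)$ is simple, every unit-speed geodesic exits $M_0$ in finite positive time and hits $\partial M_0$ transversally, so $\Phi$ is a diffeomorphism from its domain onto a full-measure subset of $SM_0$ (the set of glancing directions and the outward boundary form a negligible set).

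I would then compute the pullback $\Phi^* d(SM_0)$ in two steps. First, Liouville's theorem asserts that the geodesic flow preserves $d(SM_0)$, so $\Phi^* d(SM_0)$ is independent of the flow parameter $t$, and it suffices to evaluate it at $t=0$. Second, denoting the geodesic spray on $SM_0$ by $G$, one verifies the boundary contraction identity
\begin{equation*}
\iota_G \, d(SM_0)\big|_{\partial(SM_0)} = \mu(x,\xi) \, d(\partial(SM_0)).
\end{equation*}
These two facts together yield $\Phi^* d(SM_0) = \mu(x,\xi) \, d(\partial(SM_0))(x,\xi) \wedge dt$, after which change of variables and Fubini's theorem applied to $\int_{SM_0} F \, d(SM_0)$ deliver the Santal\'o identity.

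The main obstacle is verifying the boundary contraction identity. To handle it, I would choose boundary-adapted coordinates $(x',x^n)$ on $M_0$ with $\partial M_0 = \{x^n = 0\}$ and $\nu = -\partial_{x^n}$, and lift to coordinates $(x,\xi)$ on $TM_0$; the Liouville measure is then (a constant multiple of) $|g_0|^{1/2}\, dx\, dS_\xi$, where $dS_\xi$ denotes the round measure on the unit sphere in the fiber, and the spray takes the form $G = \xi^i \partial_{x^i} - \Gamma^i_{jk}\xi^j \xi^k \partial_{\xi^i}$. Contracting $G$ into the volume form and restricting to $\{x^n = 0\}$ annihilates every term except the one proportional to $\xi^n$, and since $\xi^n\big|_{\partial M_0} = -\la \xi, \nu \ra = \mu(x,\xi)$, the identity follows; strict convexity of $\partial M_0$ makes this a clean global statement rather than a purely local one.
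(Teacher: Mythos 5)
The paper offers no proof of this lemma to compare against: it is quoted, together with Lemmas \ref{rayadj} and 4.7, directly from Section 5 of the reference \cite{dfks}, so your proposal is supplying an argument where the paper supplies only a citation. That said, your argument is the standard proof of Santal\'o's formula and it is sound. The three ingredients you isolate are exactly the right ones: (i) the flow-out map $\Phi(x,\xi,t)=\varphi_t(x,\xi)$ is a diffeomorphism from $\{(x,\xi)\in\partial_+(SM_0),\ 0<t<\tau(x,\xi)\}$ onto a full-measure subset of $SM_0$, where simplicity gives finite exit times and strict convexity of $\partial M_0$ forces $\tau(x,\xi)=0$ on the glancing set $\langle\xi,\nu\rangle=0$, so that set and its flow-out are negligible; (ii) Liouville invariance of $d(SM_0)$ under the geodesic flow, which reduces the Jacobian computation to $t=0$, where $d\Phi$ sends $\partial_t$ to the spray $G$; and (iii) the flux identity $\iota_G\, d(SM_0)=\mu\, d(\partial(SM_0))$ along $\partial(SM_0)$, whose coordinate verification you give correctly: in boundary normal coordinates only the $\xi^n\partial_{x^n}$ component of $G$ survives restriction to $\{x^n=0\}$, and $\xi^n=-\langle\xi,\nu\rangle=\mu$ there. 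Two points worth making explicit in a written-up version: the fiber measure $dS_\xi$ depends on the base point through $g_0(x)$, so the product expression $|g_0|^{1/2}\,dx\, dS_\xi$ for the Liouville measure needs a word of justification (alternatively, working with the contact volume form, for which $G$ is the Reeb field, makes both the invariance and the contraction identity immediate); and the final Fubini step only needs $F$ continuous, hence bounded, on the compact $SM_0$, which you have. Note also that the paper's statement writes $F(\gamma(t,x,\xi))$ for a function $F$ on $SM_0$; as your proof and the paper's own application in Appendix A make clear, this should be read as $F(\varphi_t(x,\xi))$.
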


\begin{lem}\label{rayadj}
If $f \in C^\infty(M_0)$ and $h \in C^\infty((\partial_+(SM_0))^{int})$ then
$$
(T_\lambda f, h)_{L^2_\mu(\partial_+(SM_0))} = (f, T^*_\lambda h)_{L^2(M_0)}
$$
where
$$
T^*_\lambda h(x) = \int_{S_x} e^{-\lambda \tau(x, -\xi)} h_{\varphi}(x,\xi) \vspace{2pt} dS_x(\xi), \quad
x \in M_0.
$$
\end{lem}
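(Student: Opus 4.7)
My plan is to derive the identity directly from the Santaló formula (Lemma \ref{santform}) by computing the pairing on $\partial_+(SM_0)$ as an integral over $SM_0$ of a carefully chosen function, and then integrating that function in the fiber direction to recover the formula for $T^*_\lambda$.

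First, I would unfold the definitions on the left-hand side:
\begin{align*}
(T_\lambda f, h)_{L^2_\mu(\partial_+(SM_0))} = \int_{\partial_+(SM_0)} \int_0^{\tau(x,\xi)} f(\gamma(t,x,\xi))\, e^{-\lambda t}\, h(x,\xi)\, \mu(x,\xi)\, dt\, d(\partial(SM_0))(x,\xi).
\end{align*}
I would then introduce the function $F \colon SM_0 \rar \R$ defined by $F(y,\eta) = f(y)\, e^{-\lambda \tau(y,-\eta)}\, h_{\varphi}(y,\eta)$. For $(x,\xi) \in \partial_+(SM_0)$ and $t \in (0,\tau(x,\xi))$, set $(y,\eta) = \varphi_t(x,\xi)$. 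Two identities are the heart of the argument: first, $\tau(y,-\eta) = t$, which follows because the geodesic from $(y,-\eta)$ retraces the original geodesic backward and exits $M_0$ exactly when it reaches the entry point $(x,\xi)$; second, $h_{\varphi}(y,\eta) = h(x,\xi)$, which is immediate from the definition $h_{\varphi}(y,\eta) = h(\varphi_{-\tau(y,-\eta)}(y,\eta))$ combined with the first identity and the flow property $\varphi_{-t}\circ \varphi_t = \mathrm{id}$. Together these yield $F(\varphi_t(x,\xi)) = f(\gamma(t,x,\xi))\, e^{-\lambda t}\, h(x,\xi)$.

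Applying Lemma \ref{santform} to $F$ then converts the integral on $\partial_+(SM_0)$ into $\int_{SM_0} F\, d(SM_0)$. Fibering $SM_0$ over $M_0$ and pulling $f(y)$ out of the inner integral gives
\begin{align*}
\int_{SM_0} F\, d(SM_0) = \int_{M_0} f(y) \int_{S_y} e^{-\lambda \tau(y,-\eta)}\, h_{\varphi}(y,\eta)\, dS_y(\eta)\, dM_0(y) = (f, T^*_\lambda h)_{L^2(M_0)},
\end{align*}
which is precisely the desired identity.

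The main point requiring care is not conceptual but technical: integrability. Both $\tau(y,-\eta)$ and $h_{\varphi}(y,\eta)$ can become singular as $(y,\eta)$ approaches $\partial(SM_0)$ near the glancing set where $\mu = 0$. The hypothesis that $h$ is smooth on $(\partial_+(SM_0))^{int}$, together with the simple manifold assumption on $M_0$ (which bounds $\tau$ uniformly away from the glancing boundary on compact subsets), suffices to ensure $F$ is integrable on $SM_0$ so that the Santaló formula and Fubini apply. This is the only mildly delicate point, and it is handled exactly as in the proof of the corresponding statement in \cite{dfks}.
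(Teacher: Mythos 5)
Your proof is correct and takes essentially the same route as the paper: the paper cites the lemma from \cite{dfks} without a standalone proof, but the identical computation (rewriting the integrand along the flow via $h(x,\xi)=h_{\varphi}(\varphi_t(x,\xi))$ and $e^{-\lambda t}=e^{-\lambda\tau(y,-\eta)}$ for $(y,\eta)=\varphi_t(x,\xi)$, applying the Santal\'o formula, then fibering $SM_0$ over $M_0$) appears verbatim in its proof of Lemma \ref{raytr}. Your closing remark on integrability near the glancing set is a reasonable precaution but not an obstacle here, since on a simple manifold $\tau$ is uniformly bounded and the lemma is only ever invoked with $h$ compactly supported in $(\partial_+(SM_0))^{int}$.
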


\begin{lem}
$T^*_\lambda T_\lambda$ is a self--adjoint elliptic pseudodifferential operator of order $-1$ in $M_0^{int}$.
\end{lem}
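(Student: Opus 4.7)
My plan is as follows. Self-adjointness of $T_\lambda^* T_\lambda$ is immediate from the identity $(T_\lambda^* T_\lambda)^* = T_\lambda^* T_\lambda$, so the substantive assertion is that $T_\lambda^* T_\lambda$ is an elliptic pseudodifferential operator of order $-1$ on $M_0^{int}$. I will prove this by producing an explicit Schwartz kernel for $T_\lambda^* T_\lambda$ with a Riesz-type diagonal singularity, and then reading off the order and symbol from the kernel.

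The first step is to unfold $T_\lambda^* T_\lambda f(x)$ by inserting the formula for $T_\lambda f$ into the definition of $T_\lambda^*$ given in Lemma \ref{rayadj}. Setting $(y,\eta) = \varphi_{-\tau(x,-\xi)}(x,\xi)$ and using the identity $\gamma(s,y,\eta) = \gamma(s - \tau(x,-\xi), x, \xi)$, the substitution $t = s - \tau(x,-\xi)$ inside the $T_\lambda f(y,\eta)$ integral yields
\begin{align*}
T_\lambda^* T_\lambda f(x) = \int_{S_x} \int_{-\tau(x,-\xi)}^{\tau(x,\xi)} f(\gamma(t,x,\xi))\, e^{-\lambda t - 2\lambda \tau(x,-\xi)}\, dt\, dS_x(\xi).
\end{align*}
Splitting the $t$-integral at $0$ and sending $\xi \mapsto -\xi$ on the negative piece would then collapse everything to an integral over $[0,\tau(x,\xi)] \times S_x$ against an explicit smooth weight that is symmetric under $\xi \mapsto -\xi$.

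The second step is the change of variables $y = \exp_x(t\xi)$. In geodesic normal coordinates at $x$ the $(n-1)$-dimensional volume element on $M_0$ factors as $dy = J(x,t,\xi)\, t^{n-2}\, dt\, dS_x(\xi)$ for a smooth Jacobian $J$, so $T_\lambda^* T_\lambda$ is realized as an integral operator on $M_0^{int}$ with Schwartz kernel
\begin{align*}
\mathcal{K}_\lambda(x,y) = \frac{k_\lambda(x,y)}{d(x,y)^{n-2}},
\end{align*}
where $k_\lambda$ is assembled from $J$ and the attenuation factors $e^{\pm \lambda d(x,y)} e^{-2\lambda \tau(x,\pm \xi_{xy})}$. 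Simplicity of $(M_0,g_0)$ enters crucially at this point: it guarantees smoothness of $\tau(x,\xi)$ and of $\exp_x$ in the interior, and when combined with the $\xi \mapsto -\xi$ symmetrization from step one, it ensures that $k_\lambda$ extends smoothly across the diagonal $y = x$.

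To conclude, a kernel of Riesz form $k(x,y)/d(x,y)^{m-1}$ on an $m$-dimensional manifold (here $m = n-1$) with $k$ smooth up to the diagonal is a standard example of a classical pseudodifferential operator of order $-1$; its principal symbol at $(x,\eta) \in T^*M_0^{int} \setminus 0$ is given by a standard oscillatory-integral formula in terms of the angular diagonal function $\xi \mapsto e^{-2\lambda \tau(x,-\xi)} + e^{-2\lambda \tau(x,\xi)}$, which is manifestly strictly positive, so the operator is elliptic. The main technical obstacle I anticipate is precisely the smooth-diagonal-extension of $k_\lambda$: a priori the unit direction $\xi_{xy}$ is singular at $y = x$, and showing that the two angular branches produced by the step-one symmetrization combine into a function that extends smoothly, using the smoothness of $\tau$ and $\exp_x$ afforded by simplicity, is what places $T_\lambda^* T_\lambda$ inside the classical pseudodifferential calculus rather than merely identifying it as a weakly singular integral operator.
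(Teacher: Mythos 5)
First, a point of reference: the paper gives no proof of this lemma at all --- it is quoted, together with the Santal\'o formula and the adjoint formula, from Section 5 of the Dos Santos Ferreira--Kenig--Salo paper (which in turn rests on the computation in Dos Santos Ferreira--Kenig--Salo--Uhlmann). So there is no in-paper argument to compare against; your strategy --- unfold $T_\lambda^* T_\lambda$, symmetrize in $\xi$, pass to geodesic polar coordinates to exhibit a kernel with a $d(x,y)^{-(n-2)}$ singularity on the $(n-1)$--dimensional manifold $M_0$, and read off order $-1$ and ellipticity --- is exactly the standard route taken in those references, and your unfolded formula for $T_\lambda^* T_\lambda f(x)$ is correct.

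There is, however, a genuine error in your key technical claim. After symmetrization, in normal coordinates $v = \exp_x^{-1}(y)$, $t = |v|$, $\hat v = v/|v|$, the kernel is
\[
\mathcal K_\lambda(x,y) \;=\; \frac{e^{-\lambda t - 2\lambda \tau(x,-\hat v)} + e^{\lambda t - 2\lambda \tau(x,\hat v)}}{J(x,v)\, t^{\,n-2}},
\]
and the numerator does \emph{not} extend smoothly --- indeed not even continuously --- across the diagonal: its limit as $t \to 0$ is $e^{-2\lambda\tau(x,-\hat v)} + e^{-2\lambda\tau(x,\hat v)}$, which still depends on the direction $\hat v$. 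The $\xi \mapsto -\xi$ symmetrization makes this angular function even, but evenness is not constancy, so the kernel is not of Riesz form ``smooth function over $d(x,y)^{n-2}$.'' Your own principal-symbol formula betrays the problem: were $k_\lambda$ genuinely smooth at the diagonal, the symbol would be $k_\lambda(x,x)$ times the isotropic Fourier transform of $|v|^{-(n-2)}$, not an angular average of $e^{-2\lambda\tau(x,\xi)} + e^{-2\lambda\tau(x,-\xi)}$ over $\xi \perp \eta$. The statement you actually need is that the kernel is polyhomogeneous conormal to the diagonal: it equals $a(x,\hat v, t)\, t^{-(n-2)}$ with $a$ smooth jointly in $(x,\hat v) \in SM_0^{int}$ and $t \in [0,\delta)$; this \emph{is} what simplicity buys you (smoothness of $\exp_x$, absence of conjugate points so $J > 0$, and smoothness of $\tau$ on $SM_0^{int}$ by strict convexity of $\partial M_0$). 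Kernels of this form are precisely the classical pseudodifferential kernels of order $-(n-1)+(n-2) = -1$, with principal symbol a positive constant times
\[
|\eta|^{-1} \int_{S_x \cap \eta^{\perp}} \left( e^{-2\lambda\tau(x,\xi)} + e^{-2\lambda\tau(x,-\xi)} \right) dS(\xi),
\]
which is strictly positive since the integrand is positive and $S_x \cap \eta^{\perp}$ is nonempty for $n \geq 3$. With ``smooth across the diagonal'' replaced by ``smooth in polar coordinates about the diagonal,'' your argument closes and coincides with the proof in the cited literature.
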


We now turn to the proof of Lemma \ref{raytr}.

\begin{proof}[Proof of Lemma \ref{raytr}]
Since $F \in \mathcal{E}'(M_0^{int})$, we can choose a sequence $f_n \in C^\infty_0(M_0^{int})$ such that
\begin{align}\label{appa1}
\la F, g \ra_{M^{int}_0} = \lim_{n \rightarrow \infty} (f_n , g)_{L^2(M_0)}, \quad \forall g \in C^\infty(M_0)
\end{align}
and so that there exists a constant $C_0 > 0$ and integer $N \geq 1$ such that for all $g \in C^\infty(M_0)$,
\begin{align}\label{appa2}
|(f_n , g)_{L^2(M_0)}| \leq C_0 \| g \|_{C^N(M_0)}
\end{align}
uniformly in $n$.

Let $b$ from \eqref{raytrident} depend also on $\omega$.  By changing notation, we see that
\eqref{raytrident} and \eqref{appa1} imply
$$
\lim_{n \rightarrow \infty} \int_{S_x} \int_0^{\tau(x,\xi)} e^{-\lambda t} f_n(\gamma(t,x,\xi)) b(x,\xi) \hspace{2pt}
dt \hspace{2pt} dS_x(\xi) = 0
$$
for all $x \in \partial M_0$ and $b \in C^\infty_0((\partial_+(SM_0))^{int})$.  We choose $b(x,\xi) = h(x,\xi)
\mu(x,\xi)$ for $h \in C^\infty_0((\partial_+(SM_0))^{int})$, integrate the last identity over $\partial M_0$, and use
\eqref{appa2} and the dominated convergence theorem to obtain
$$
\lim_{n \rightarrow \infty} \int_{\partial_+(SM_0)} \int_0^{\tau(x,\xi)} e^{-\lambda t}
f_n(\gamma(t,x,\xi)) h(x,\xi) \mu \hspace{2pt} dt \hspace{2pt} d(\partial(SM_0)) = 0.
$$
By Lemma \ref{santform} and \ref{rayadj} we have for each $n$
\begin{align*}
\int_{\partial_+(SM_0)}& \int_0^{\tau(x,\xi)} e^{-\lambda t}
f_n(\gamma(t,x,\xi)) h(x,\xi) \mu \hspace{2pt} dt \hspace{2pt} d(\partial(SM_0)) \\ &=
\int_{\partial_+(SM_0)} \int_0^{\tau(x,\xi)} e^{-\lambda t}
f_n(\varphi_t(x,\xi)) h_{\varphi}(\varphi_t(x,\xi)) \mu \hspace{2pt} dt \hspace{2pt} d(\partial(SM_0)) \\
&= \int_{SM_0} e^{-\lambda \tau(x,-\xi)} f_n(x) h_{\varphi}(x,\xi) \hspace{2pt} d(SM_0) \\
&= \int_{M_0} f_n(x) \left ( \int_{S_x} e^{-\lambda \tau(x,-\xi)} f_n(x) h_{\varphi}(x,\xi) \hspace{2pt} dS_x(\xi) \right )
\hspace{2pt} dV(x) \\
&= (f_n, T^*_\lambda h)_{L^2(M_0)}.
\end{align*}
We let $n \rightarrow \infty$ and use \eqref{appa1} to deduce
$$
\la F, T^*_\lambda h \ra_{M_0^{int}} = 0
$$
for all $h \in C^{\infty}_0((\partial_+(SM_0))^{int})$.

We now choose $h = T_\lambda \psi$ for $\psi \in C^\infty_0(M^{int})$
to obtain
$$
\la F, T^*_\lambda T_\lambda \psi \ra_{M_0^{int}} = 0.
$$
Since $T^*_\lambda T_\lambda$ is self--adjoint, the previous line implies that $T^*_\lambda T_\lambda F = 0$.  By ellipticity,
there exists a pseudodifferential operator of order $1$ in $M_0^{int}$ denoted by $B$ and a smoothing operator $R :
\mathcal{E}'(M_0^{int}) \rightarrow C^\infty(M_0^{int})$ so that $B T^*_\lambda T_\lambda = I + R$.  Thus, $T^*_\lambda T_\lambda F = 0$
implies that $F = -R F \in C^{\infty}(M_0^{int})$.  We can now use the argument for smooth $F$ from before to conclude the
proof that $F = 0$.
\end{proof}

\end{document}